\newtheorem{Theorem}{Theorem}
\newtheorem{Proposition}[Theorem]{Proposition}
\newtheorem{Definition}[Theorem]{Definition}
\newtheorem{Corollary}[Theorem]{Corollary}
\newtheorem{Lemma}[Theorem]{Lemma}
\newtheorem{Remark}[Theorem]{Remark}
\begin{document}

\title{\textbf{On \( p \)-hyponormal operators on \\quaternionic Hilbert spaces}}
\author{ M. Fashandi\footnote{{\small{E-mail:}  fashandi@um.ac.ir (M. Fashandi)}} \\
\footnotesize{  Faculty of Mathematical Sciences, Ferdowsi University of Mashhad,}\\  
  \footnotesize{P.O. Box 1159,  Mashhad, 91775\ Iran }
  }

\date{}
\maketitle

\begin{abstract}

This paper extends the notion of a \( p \)-hyponormal operator for a bounded right linear quaternionic operator defined on a right quaternionic Hilbert space. Several fundamental properties of complex \( p \)-hyponormal operators are investigated for the quaternionic ones. To develop the results, we prove the well-known Furuta inequality for quaternionic positive operators. This inequality opens the way to discuss the \( p \)-hyponormality of a quaternionic operator and its Aluthge transform. Finally, a new class of quaternionic operators is established between quaternionic \( p \)-hyponormal and quaternionic paranormal operators.

\end{abstract}

\noindent {\bf Keywords:} Alutghe transform, Operator inequality, Polar decomposition,  p-hyponormal operator,  Quaternionic Hilbert spaces.\\
{\it 2020 Mathematics Subject Classification MSC: 47A63, 47S05 46S05}  \\
\maketitle

%%%%%%%%%%%%%%%%%%%%%%%%%%%%%%%%
\section{Introduction}
``All analysts spend half their time hunting through the literature for inequalities which they want to use and cannot prove", said  G. H. Hardy on November 8, 1928, during his presidential address to the London Mathematical Society~\cite{Fink}. Both mathematical analysts and humans have been using inequalities for thousands of years, dating back to the Mesopotamian and Egyptian civilizations, to measure land and construct various projects. The earliest inequalities known to the ancient Greeks included the triangle inequality and the arithmetic-geometric mean inequality for two numbers. For a detailed history of inequalities in mathematics, refer to \cite{Fink}. Inequalities are crucial in establishing bounds and defining key mathematical concepts such as limit definitions, convex functions, and positive operators. Named inequalities, like the Cauchy-Schwarz and Hölder's inequalities, are significant results attributed to individuals who proved them and are recognized for their importance. Over time, new inequalities are established, some old ones refined, and others find new applications in science and engineering. In various mathematical disciplines, new tools extend inequalities to different spaces. The study of operator inequalities in Hilbert spaces is one of the most fascinating areas of mathematical analysis. It has captivated the interest of numerous mathematicians over the past century (see, for instance, \cite{Drag2}, \cite{Drag}, \cite{Huy}, \cite{Kit}, and \cite{Mond}, along with the references provided there).

Many papers have been published extending operator inequalities to spaces similar to or derived from Hilbert spaces, such as Hilbert \( C^* \)-modules and direct sum of Hilbert spaces. Additionally, these extensions may involve different types of operators (see \cite{Fuji} and \cite{Drag4}). Quaternionic Hilbert spaces are currently a popular topic in mathematical physics, with numerous applications in quantum systems (for further details, see \cite{Ghi1} and the references therein). To study operator inequality in quaternionic Hilbert spaces, one must be familiar with spherical spectrum and continuous functional calculus in a quaternionic setting. Colombo et al. in \cite{Col-book} defined the spherical spectrum of operators on two-sided quaternionic Banach spaces. Later, in \cite{Colombo} and \cite{Ghi1}, this concept was re-adapted in quaternionic Hilbert spaces. Also, Ghiloni et al., in \cite{Ghi1}, present the definition of continuous functional calculus in quaternionic Hilbert spaces. Using these tools, some operator inequalities in quaternionic Hilbert spaces are derived in \cite{Adv.} and \cite{Kont}. This paper aims to define quaternionic \( p \)-hyponormal operators and analyze their Alutghe transform. We extend some fundamental properties of complex \( p \)-hyponormal operators to their quaternionic counterparts. To achieve this, it is essential to apply various quaternionic operator inequalities, including the Quaternionic Hölder-McCarthy inequality and the Quaternionic  L\"{o}wner-Heinz inequality, both of which were established by the author in \cite{Adv.}. Additionally, the Quaternionic Furuta inequality is presented and proved in this paper.

%This paper aims to define quaternionic \( p \)-hyponormal operators and study their Alutghe transform. We extend some of the basic properties of the complex \( p \)-hyponormal operators to the quaternionic ones. In this direction, it is crucial to apply some quaternionic operator inequalities such as Quaternionic H\"{o}lder-McCarthy's inequality and  Quaternionic  L\"{o}wner-Heinz inequality that has been proved in \cite{Adv.} and Quaternionic Furuta inequality that is stated and proved in this paper. 

This paper is structured as follows: Section 2 will review the essential definitions and theorems for the proofs. Section 3 introduces the concept of quaternionic \( p \)-hyponormal operators and explores their fundamental properties. This topic is closely related to the polar decomposition of an operator. Therefore, it is essential to study some transformations based on the polar decomposition of a quaternionic operator, such as its quaternionic Aluthge transform. One of the most intriguing findings from this section is the generalization of the Furuta inequality for quaternionic positive operators. Finally, Section 4 extends the generalized Cauchy-Schwarz inequality for quaternionic operators and demonstrates that any quaternionic operator satisfying this inequality is paranormal.

\section{Preliminaries and Auxiliary Results}

%%%%%%%%%%%%%%%%%%%%%%%%%%%%%%%%%%

In this paper, we will denote the skew field of quaternions by $\mathbb{H}$. The elements of $\mathbb{H}$ can be expressed in the form \( q = x_0 + x_1i + x_2j + x_3k \), where \( x_0, x_1, x_2, \) and \( x_3 \) are real numbers. In this context, \( i, j, \) and \( k \) are referred to as imaginary units, which follow specific multiplication rules as follows:
\begin{equation*}
i^2=j^2=k^2=-1,\;\; ij=-ji=k,\;\; jk=-kj=i,\;\; {\rm\mbox and}\;\; ki=-ik=j.
\end{equation*}
Also, $\mathsf{H}$ denotes a right quaternionic Hilbert space, which is a linear vector space over the skew field of  quaternions under right scalar multiplication and an inner product 
$\langle.,.\rangle:\mathsf{H}\times \mathsf{H}\longrightarrow \mathbb{H}$
with the following properties:
\begin{enumerate}
\item[(i)] $\overline{\langle u, v\rangle}=\langle v, u\rangle$,
\item[(ii)] $\langle u, u\rangle>0$ unless $u=0$,
\item[(iii)] $\langle u,vp+wq\rangle=\langle u, v\rangle p+\langle u, w\rangle q$,
\end{enumerate}
for every $u, v, w\in \mathsf{H}$ and $p, q\in \mathbb{H}$. The quaternionic norm  $\Vert u \Vert= \sqrt{\langle u, u\rangle}$
is assumed to produce a complete metric space (see  Proposition 2.2 in  \cite{Ghi1}). 
Like the complex Hilbert spaces, every quaternionic Hilbert space admits a Hilbert basis. This fact can be found in Propositions 2.5 and 2.6 in \cite{Ghi1}.
\begin{Proposition}(Propositions 2.5 and 2.6 of \cite{Ghi1})\label{Hbasis}
Let $\mathsf{H}$ be a right  quaternionic Hilbert space and let $\mathsf{N}$ be a subset of $\mathsf{H}$ such that, for $z, z'\in \mathsf{N}$,
$\langle z,z'\rangle=0$ if $z\neq z'$ and $\langle z,z\rangle=1$. Then conditions (a)-(e) listed below are pairwise equivalent.
\begin{itemize}
\item[(a)] For every $u, v\in \mathsf{H}$ the series $\sum_{z\in \mathsf{N}}\langle u,z\rangle \langle z,v\rangle$ converges absolutely to $\langle u,v\rangle$.
\item[(b)] every $u\in\mathsf{H}$ can be uniquely decomposed as $u=\sum_{z\in \mathsf{N}} z \langle z,u\rangle$, where the series  converges absolutely in $\mathsf{H}$.
\item[(c)] $\Vert u\Vert^2=\sum_{z\in \mathsf{N}} \vert \langle z,u\rangle\vert^2$ for every $u\in \mathsf{H}$.
\item[(d)] $\mathsf{N}^\perp:=\{v\in\mathsf{H}: \langle v,z\rangle=0,\,\forall z\in\mathsf{N}\}=\{0\}.$
\item[(e)] $\langle \mathsf{N}\rangle$ is dense in $\mathsf{H}.$
\end{itemize}
\end{Proposition}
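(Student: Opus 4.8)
The plan is to run the classical Hilbert-space argument for the equivalence of the defining conditions of an orthonormal basis, paying attention only to the noncommutativity of $\mathbb{H}$ and to the convention that $\langle\cdot,\cdot\rangle$ is right-linear in the second argument. I would prove the chain $(a)\Rightarrow(c)\Rightarrow(d)\Rightarrow(b)\Rightarrow(a)$, together with $(b)\Rightarrow(e)$ and $(e)\Rightarrow(d)$, which renders all five conditions pairwise equivalent. The one computation used repeatedly is the Pythagorean identity: for a finite $F\subseteq\mathsf{N}$ and scalars $p_z\in\mathbb{H}$, expanding $\big\langle\sum_{z\in F}zp_z,\sum_{z'\in F}z'p_{z'}\big\rangle$ and using $\langle z,z'\rangle=0$ for $z\neq z'$ and $\langle z,z\rangle=1$ gives $\big\|\sum_{z\in F}zp_z\big\|^2=\sum_{z\in F}|p_z|^2$. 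Applying this to the vector $w_F=u-\sum_{z\in F}z\langle z,u\rangle$, which is orthogonal to every $z\in F$ (again by orthonormality), yields $\|u\|^2=\|w_F\|^2+\sum_{z\in F}|\langle z,u\rangle|^2$ and hence Bessel's inequality $\sum_{z\in\mathsf{N}}|\langle z,u\rangle|^2\le\|u\|^2$. In particular, for each fixed $u$ at most countably many coefficients $\langle z,u\rangle$ are nonzero and the family $(|\langle z,u\rangle|^2)_{z\in\mathsf{N}}$ is summable, so the Pythagorean identity forces the net of finite partial sums of $\sum_{z\in\mathsf{N}}z\langle z,u\rangle$ to be Cauchy; by completeness of $\mathsf{H}$ it converges, and I will write $Pu$ for its limit. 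The behaviour of this map $P$ carries the whole proof.

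The easy implications I would dispatch first. $(a)\Rightarrow(c)$: set $v=u$ in $(a)$ and use $\langle u,z\rangle\langle z,u\rangle=\overline{\langle z,u\rangle}\langle z,u\rangle=|\langle z,u\rangle|^2$. $(c)\Rightarrow(d)$: if $v\in\mathsf{N}^\perp$ then $\langle z,v\rangle=\overline{\langle v,z\rangle}=0$ for all $z$, so $(c)$ gives $\|v\|^2=0$. $(b)\Rightarrow(a)$: the inner product is continuous in each variable by the Cauchy--Schwarz inequality, so applying $\langle u,\cdot\rangle$ termwise to $v=\sum_{z}z\langle z,v\rangle$ gives $\langle u,v\rangle=\sum_{z\in\mathsf{N}}\langle u,z\rangle\langle z,v\rangle$, and this series converges absolutely since $|\langle u,z\rangle\langle z,v\rangle|\le\tfrac12\big(|\langle u,z\rangle|^2+|\langle z,v\rangle|^2\big)$ is summable by Bessel. $(b)\Rightarrow(e)$: each finite partial sum $\sum_{z\in F}z\langle z,u\rangle$ lies in $\langle\mathsf{N}\rangle$, so $(b)$ realizes every $u$ as a limit of such elements. $(e)\Rightarrow(d)$: if $v\in\mathsf{N}^\perp$ then $\langle v,x\rangle=0$ for every $x$ in the right-linear span $\langle\mathsf{N}\rangle$, hence for every $x\in\mathsf{H}$ by density and continuity, so $\langle v,v\rangle=0$.

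The substantive step is $(d)\Rightarrow(b)$, where I expect the only real (and still mild) obstacle: showing that the limit $Pu$ of the partial sums actually equals $u$, by combining orthonormality with continuity of the inner product in the quaternionic setting. Using continuity of $\langle z,\cdot\rangle$ one gets $\langle z,Pu\rangle=\lim_{F}\sum_{z'\in F}\langle z,z'\rangle\langle z',u\rangle=\langle z,u\rangle$, so $w:=u-Pu$ satisfies $\langle z,w\rangle=0$, equivalently $\langle w,z\rangle=0$, for every $z\in\mathsf{N}$; thus $w\in\mathsf{N}^\perp$ and $(d)$ forces $w=0$, i.e. $u=Pu=\sum_{z\in\mathsf{N}}z\langle z,u\rangle$. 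Uniqueness of the expansion is then automatic: if $u=\sum_{z}zp_z$ with convergent partial sums, applying $\langle z',\cdot\rangle$ and using orthonormality pins down $p_{z'}=\langle z',u\rangle$. One last point worth flagging: Bessel only gives $\sum_{z}|\langle z,u\rangle|^2<\infty$, not $\sum_{z}|\langle z,u\rangle|<\infty$, so the ``absolute convergence'' in $(b)$ should be read as convergence of the net of finite partial sums — precisely the sense supplied by the Cauchy estimate above — and no further complication arises from $\mathbb{H}$ being noncommutative beyond keeping every scalar on its proper side.
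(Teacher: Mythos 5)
The paper does not actually prove this proposition: it is quoted, with citation, from Propositions 2.5 and 2.6 of the Ghiloni--Moretti--Perotti reference \cite{Ghi1}, so there is no in-paper argument to compare against. Your proof is correct and is the standard orthonormal-basis argument, executed with the right quaternionic conventions (scalars kept on the correct side in the Pythagorean identity, Bessel's inequality, and the $(d)\Rightarrow(b)$ step), i.e.\ essentially the argument of the cited source. Your closing caveat is also well taken: for the vector series in (b), Bessel only gives square-summability of the coefficients, so ``converges absolutely in $\mathsf{H}$'' must be understood as unconditional convergence of the net of finite partial sums, which is precisely what your Cauchy-net argument supplies.
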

\noindent The subset $\mathsf{N}$ in Proposition \ref{Hbasis} is called a Hilbert basis.\\
\noindent A right quaternionic Hilbert space $\mathsf{H}$ can be equipped with a left multiplication with quaternions. According to  Section 3.1 in \cite{Ghi1}, the left scalar multiplication on $\mathsf{H}$ induced by a Hilbert basis $\mathsf{N}$ of $\mathsf{H}$ is defined by
\begin{equation}
\label{left multiplication}
qu:= \sum_{z\in \mathsf{N}}zq\langle z,u\rangle \,\,\,\mbox{if} \,\,\,u\in \mathsf{H} \,\,\, \mbox{and} \,\,\,\, q\in \mathbb{H}.
\end{equation}
We refer the interested reader to Proposition 3.1 in \cite{Ghi1} for the properties of the left multiplication defined by (\ref{left multiplication}).
\noindent The notation $\mathfrak{B}(\mathsf{H})$ shows the set of all bounded right linear quaternionic operators on $\mathsf{H}$ that means 
 $$T(up+v)=(Tu)p+Tv,$$
for all $u, v \in \mathsf{H}$ and $p\in \mathbb{H}$. By  Proposition 2.11 in \cite{Ghi1}, $\mathfrak{B}(\mathsf{H})$  is a complete normed space with the norm defined by 
\begin{equation*}\label{norm1}
\Vert T\Vert= \sup \left\{\dfrac{\Vert Tu\Vert}{\Vert u\Vert}, 0\neq u\in\mathsf{H}\right\}.
\end{equation*}
Adjoint of $T\in \mathfrak{B}(\mathsf{H})$, selfadjoint, positive, normal, and unitary right linear quaternionic operators in $\mathfrak{B}(\mathsf{H})$ are defined similarly to the complex case (see Definition 2.12 in \cite{Ghi1}).
In Section 4 in \cite{Ghi1}, one can see the extension of the definitions of spectrum and resolvent in $\mathsf{H}$, as follows.
\begin{Definition}\label{spectrum}\cite{Ghi1}
Let $\mathsf{H}$ be a right quaternionic Hilbert space and $T\in \mathfrak{B}(\mathsf{H})$ be a right linear quaternionic operator. For $q\in \mathbb{H}$, the associated operator $\Delta_{q}(T)$ is defined by:
$$
\Delta_q(T):=T^2- T(q+\bar{q})+I \vert q\vert^2.
$$
The spherical resolvent set of $T\in \mathfrak{B}(\mathsf{H})$ is the set
$$
\rho_{S}(T):=\{q\in\mathbb{H}:\,\,\, \Delta_q(T)^{-1}\in \mathfrak{B}(\mathsf{H})\}.
$$
\end{Definition}
\noindent The \textit{spherical spectrum} $\sigma_{S}(T)$ of $T$, or briefly the S-spectrum, is defined as the complement of $\rho_{S}(T)$ in $\mathbb{H}$. A partition for $\sigma_{S}(T)$ was introduced in \cite{Ghi1}, as follows:
\begin{itemize}
\item[(i)] The \textit{spherical point spectrum} of $T$:
$$
\sigma_{pS}(T)=\{q\in\mathbb{H}; \ker(\Delta_{q}(T))\neq\{0\}\}.
$$
\item[(ii)] The\textit{ spherical residual spectrum} of $T$:
$$\sigma_{rS}(T)=\{q\in\mathbb{H}; \ker(\Delta_{q}(T))=\{0\}, \overline{Ran (\Delta_{q}(T))}\neq\mathsf{H}\}.$$
\item[(iii)] The spherical continuous spectrum of $T$:
$$\sigma_{cS}(T)=\{q\in\mathbb{H}; \ker(\Delta_{q}(T))=\{0\}, \overline{Ran (\Delta_{q}(T))}=\mathsf{H}, 
\Delta_{q}(T)^{-1}\notin \mathfrak{B}(\mathsf{H})\}.$$
\end{itemize}
The \textit{spherical spectral radius} of $T$, denoted by $r_{S}(T)$,  is defined by:
\begin{equation*}\label{radius}
r_{S}(T)=\sup\{\vert q\vert \in \mathbb{R}^{+}; q\in \sigma_{S}(T)\}.
\end{equation*}
An \textit{eigenvector} of $T$ with \textit{right} \textit{eigenvalue} $q$ is an element $u\in \mathsf{H}-\{0\}$, for which $Tu=uq$.
According to Proposition 4.5 in \cite{Ghi1}, $q$ is a right eigenvalue of $T$ if and only if it belongs to the spherical point spectrum $\sigma_{pS}(T)$.

\noindent In the sequel, we need the notion of quaternionic continuous functional calculus (see Theorem 5.5 in \cite{Ghi1}).
\begin{Theorem}\label{remark}\cite{Ghi1}
Let $\mathsf{H}$ be a right quaternionic Hilbert space and  $T\in\mathfrak{B}(\mathsf{H})$ be selfadjoint. Then, there exists a unique continuous homomorphism 
\begin{eqnarray*}
\Phi_T: \mathcal{C}(\sigma_S(T), \mathbb{R})\ni f\mapsto f(T)\in \mathfrak{B}(\mathsf{H}),
\end{eqnarray*}
of real Banach unital algebras such that:
\begin{itemize}
\item[(i)] the operator $f(T)$ is selfadjoint for every $f\in \mathcal{C}(\sigma_S(T), \mathbb{R})$.
\item[(ii)] $\Phi_T$ is positive; that is, $f(T)\geq 0$ if $f \in  \mathcal{C}(\sigma_S(T), \mathbb{R})$ and $f(t)\geq 0$ for every $t\in \sigma_S(T)$.
\end{itemize}
By $\mathcal{C}(\sigma_S(T), \mathbb{R})$ we mean the commutative real Banach unital algebra of continuous real-valued functions defined on $\sigma_S(T)$. 
\end{Theorem}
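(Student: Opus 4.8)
The plan is to imitate the classical construction of the continuous functional calculus through the Stone--Weierstrass theorem, taking care of the two quaternion-specific ingredients: the S-spectrum of a selfadjoint operator is a nonempty \emph{compact} subset of $\mathbb{R}$, and for a normal (in particular selfadjoint) operator $S\in\mathfrak{B}(\mathsf{H})$ one has the spectral radius identity $\|S\|=r_S(S)$ — both facts belonging to the quaternionic spectral theory of \cite{Ghi1}. First I would set up the polynomial functional calculus: for a real polynomial $p(t)=\sum_{n=0}^{m}a_nt^n$ with $a_n\in\mathbb{R}$, define $p(T):=\sum_{n=0}^{m}a_nT^n\in\mathfrak{B}(\mathsf{H})$. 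Since $T$ is selfadjoint, every $T^n$ is selfadjoint, and real-linear combinations of selfadjoint operators are selfadjoint, so $p(T)$ is selfadjoint; moreover $p\mapsto p(T)$ is a unital homomorphism of the real algebra $\mathbb{R}[t]$ into $\mathfrak{B}(\mathsf{H})$.

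The main technical step is the spherical spectral mapping theorem $\sigma_S(p(T))=p(\sigma_S(T))$ for $p\in\mathbb{R}[t]$. Because $\sigma_S(T)\subseteq\mathbb{R}$ and $p$ is real-valued, $p(\sigma_S(T))\subseteq\mathbb{R}$, and since $p(T)$ is selfadjoint we also have $\sigma_S(p(T))\subseteq\mathbb{R}$, so it suffices to compare the two sets at real points. For $\lambda\in\mathbb{R}$ one computes $\Delta_\lambda(p(T))=(p(T)-\lambda I)^2$, hence $\lambda\in\rho_S(p(T))$ if and only if $p(T)-\lambda I$ is invertible in $\mathfrak{B}(\mathsf{H})$. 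Factoring $p(t)-\lambda=c\prod_{j}\bigl(t^2-(\alpha_j+\bar\alpha_j)t+|\alpha_j|^2\bigr)$ over $\mathbb{R}$ into real linear and irreducible quadratic factors yields $p(T)-\lambda I=c\prod_{j}\Delta_{\alpha_j}(T)$, so $p(T)-\lambda I$ is invertible precisely when each $\alpha_j\in\rho_S(T)$; as $\sigma_S(T)\subset\mathbb{R}$, the factors coming from genuinely complex roots are automatically invertible, and one is left with: $\lambda\notin\sigma_S(p(T))$ iff $p$ attains the value $\lambda$ at no point of $\sigma_S(T)$, i.e.\ iff $\lambda\notin p(\sigma_S(T))$.

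Next, combining this with the spectral radius identity applied to the selfadjoint operator $p(T)$,
\[
\|p(T)\|=r_S(p(T))=\sup\{|q|:q\in\sigma_S(p(T))\}=\sup_{\lambda\in\sigma_S(T)}|p(\lambda)|=\|p\|_{\infty,\sigma_S(T)},
\]
so $p\mapsto p(T)$ is a real-linear \emph{isometry} (in particular well defined on polynomial functions) from the subalgebra of polynomial functions on the compact set $\sigma_S(T)$, with the sup norm, into the Banach space $\mathfrak{B}(\mathsf{H})$. By the real Stone--Weierstrass theorem this subalgebra is dense in $\mathcal{C}(\sigma_S(T),\mathbb{R})$ (it contains the constants and the coordinate function $t$ separates points), so the map extends uniquely to a continuous real-algebra homomorphism $\Phi_T:\mathcal{C}(\sigma_S(T),\mathbb{R})\to\mathfrak{B}(\mathsf{H})$, still isometric.

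Finally I would verify the listed properties. For (i), the selfadjoint operators form a norm-closed real subspace of $\mathfrak{B}(\mathsf{H})$ and $p(T)$ is selfadjoint for every polynomial, so $f(T)$ is selfadjoint for every $f$ by density. For (ii), if $f\geq0$ on $\sigma_S(T)$ then $g:=\sqrt f\in\mathcal{C}(\sigma_S(T),\mathbb{R})$ and $f=g^2$, whence $f(T)=g(T)^2$ with $g(T)$ selfadjoint, so $f(T)\geq0$. Uniqueness is immediate: any continuous unital homomorphism of real Banach algebras sending $t$ to $T$ must agree with $p\mapsto p(T)$ on the dense polynomial subalgebra, hence coincides with $\Phi_T$. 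The principal obstacle is the quaternionic spectral mapping theorem in the second paragraph — correctly translating invertibility of $p(T)-\lambda I$ into invertibility of the operators $\Delta_{\alpha_j}(T)$ and membership in $\rho_S(T)$ — together with having firmly in hand the inputs $\sigma_S(T)\subset\mathbb{R}$ and $\|S\|=r_S(S)$ for normal $S$ from \cite{Ghi1}.
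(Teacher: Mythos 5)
You should first be aware that the paper contains no proof of this statement to compare against: Theorem \ref{remark} is quoted as background, being Theorem 5.5 of \cite{Ghi1}, so the author's ``approach'' is simply citation. Judged on its own terms, your reconstruction is the standard construction and is essentially sound: polynomial calculus, a spherical spectral mapping theorem, the identity $\Vert S\Vert=r_S(S)$ for selfadjoint $S$, Stone--Weierstrass on the compact set $\sigma_S(T)\subset\mathbb{R}$, extension by continuity, then (i) from norm-closedness of the selfadjoint operators and (ii) from $f=(\sqrt f)^2$. Note, however, that the heavy step you single out is itself already available in the literature the paper leans on: the polynomial spectral mapping theorem is Theorem 4.3 of \cite{Ghi1}, and this paper invokes it explicitly in the proof of Lemma \ref{U^*SU}, so that whole paragraph could be replaced by a citation; likewise nonemptiness and compactness of $\sigma_S(T)$, its reality for selfadjoint $T$, and $\Vert S\Vert=r_S(S)$ must all be imported from \cite{Ghi1}, as you acknowledge.

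One concrete slip needs patching. The factorization $p(t)-\lambda=c\prod_j\bigl(t^2-(\alpha_j+\bar\alpha_j)t+|\alpha_j|^2\bigr)$ is not available in general: a simple real root (e.g.\ $p(t)=t$, $\lambda=0$) cannot be absorbed into a factor of this form, since $t^2-(\alpha+\bar\alpha)t+|\alpha|^2$ has a real zero only when it is a perfect square $(t-r)^2$. Two clean fixes: either factor $p(t)-\lambda$ over $\mathbb{R}$ into genuine linear factors $(t-r_i)$ and irreducible quadratics $\Delta_{z_k}(t)$, using that $T-rI$ is invertible iff $(T-rI)^2=\Delta_r(T)$ is, or work directly with $\Delta_\lambda(p(T))=(p(T)-\lambda I)^2$, whose underlying polynomial $(p(t)-\lambda)^2$ does split into factors $\Delta_{\alpha_j}(t)$; in either case conclude via the fact that a product of commuting bounded operators is invertible iff each factor is, and that for non-real $\alpha_j$ the sphere $[\alpha_j]$ misses $\sigma_S(T)\subset\mathbb{R}$, so those factors are automatically invertible. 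Finally, the uniqueness claim only makes sense relative to a normalization such as $\Phi_T(\mathrm{id})=T$ (equivalently, agreement with the polynomial calculus); the statement as quoted leaves this implicit, and your uniqueness argument correctly supplies it.
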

\begin{Remark}
\begin{itemize}
\item[(i)] By Proposition 2.17 in \cite{Ghi1}, a positive operator $T$,  is selfadjoint and  by Lemma 6.2 in \cite{Ghi2}, $\sigma_{S} (T) \subset [0,+\infty)$. Conversely, taking $f(t)=\sqrt{t}$ on $\sigma_{S} (T)\subset [0,+\infty)$, for a selfadjoint operator $T$, Theorem \ref{remark} (iii), implies that $T$ is a positive quaternionic operator. 
\item[(ii)]
Theorem \ref{remark}. (ii) implies that  for a selfadjoint quaternionic operator $T\in\mathfrak{B}(\mathsf{H})$, if $f, g \in  \mathcal{C}(\sigma_S(T), \mathbb{R})$ and $f(t)\geq g(t)$ for every $t\in \sigma_S(T)$, then $f(T)\geq g(T)$.
\end{itemize}
\end{Remark}
\noindent  For  two quaternionic operators $S$ and $T \in \mathfrak{B}(\mathsf{H})$, we write $S \leq T$ if $T-S$ is a positive quaternionic  operator, i.e. $\langle Sx, x\rangle \leq \langle Tx, x\rangle$ for every $x\in \mathsf{H}$. In particular,  for two real numbers $m<M$, by $mI\leq T\leq MI$,  we mean
$
m\langle x, x\rangle\leq \langle Tx, x\rangle\leq M\langle x, x\rangle
$
for all $x\in \mathsf{H}$.

\noindent The following theorem from \cite{Adv.} establishes the optimum bounds for the S-spectrum of a selfadjoint quaternionic operator. Also, it proves that the S-spectrums of $ST$ and $TS$, along with zero and their  S-spectral radiuses are the same. This theorem is vital for the next result. 
\begin{Theorem}\label{compact interval} Let $\mathsf{H}$ be a right quaternionic Hilbert space. For $S, T\in\mathfrak{B}(\mathsf{H})$, we have
\begin{itemize} 
\item[(i)]
If $T$ is a selfadjoint quaternionic operator and
\begin{equation*}
\label{infsup}
m_T=\inf \{\langle Tx,x\rangle\,:  \Vert x\Vert=1,\,x\in \mathsf{H}\}, \, \mbox{and}\,\, M_T=\sup \{\langle Tx,x\rangle\, : \Vert x\Vert=1,\,x\in \mathsf{H}\},
\end{equation*}
then $\sigma_S(T)\subset [m_T, M_T]$, $m_T=\min \sigma_S(T)$ and  $M_T=\max \sigma_S(T)$(see Theorem 3 in \cite{Adv.}).
\item[(ii)]
$\sigma_S(ST)\cup \{0 \}=\sigma_S(TS)\cup \{0 \}$ and $r_S(ST)=r_S(TS)$ (see Theorem 5  in \cite{Adv.}).
\end{itemize}
\end{Theorem}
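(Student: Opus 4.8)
Both assertions are due to \cite{Adv.} (Theorems 3 and 5 there); here I indicate the strategy I would follow. The plan is to treat (i) by reducing the $S$-spectrum of a selfadjoint operator to an ordinary real spectrum, and (ii) by an explicit intertwining identity.

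For part (i), the first step is the reality of $\sigma_S(T)$: for $q\notin\mathbb{R}$ one has $\Delta_q(T)=(T-(\mathrm{Re}\,q)I)^2+|\mathrm{Im}\,q|^2 I$, and since the square of a selfadjoint operator is positive, $\Delta_q(T)\geq|\mathrm{Im}\,q|^2 I>0$; a selfadjoint operator bounded below by a positive multiple of $I$ is invertible, so $q\notin\sigma_S(T)$. Hence $\sigma_S(T)\subseteq\mathbb{R}$, and for real $\lambda$ the operator $\Delta_\lambda(T)=(T-\lambda I)^2$ is invertible exactly when $T-\lambda I$ is. Thus $\sigma_S(T)$ is the set of real $\lambda$ with $T-\lambda I$ non-invertible, and I would now run the classical argument: for $\lambda>M_T$ the inequality $\langle(\lambda I-T)x,x\rangle\geq(\lambda-M_T)\|x\|^2$ together with the Cauchy--Schwarz inequality makes $\lambda I-T$ bounded below, and selfadjointness gives $\overline{\mathrm{Ran}(\lambda I-T)}=\mathsf{H}$, hence invertibility; symmetrically for $\lambda<m_T$. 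For the endpoints, $M_T I-T\geq 0$, so writing $\langle(M_T I-T)x,x\rangle=\|(M_T I-T)^{1/2}x\|^2$ and choosing unit vectors $x_n$ with $\langle Tx_n,x_n\rangle\to M_T$ gives $\|(M_T I-T)x_n\|\to 0$, so $M_T I-T$ is not bounded below and $M_T\in\sigma_S(T)$; likewise $m_T\in\sigma_S(T)$.

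For part (ii), the key is the pair of identities $S\,\Delta_q(TS)=\Delta_q(ST)\,S$ and $\Delta_q(TS)\,T=T\,\Delta_q(ST)$, obtained by expanding $\Delta_q$ as a real polynomial in $TS$, respectively $ST$. Assuming $q\neq 0$, set $s=q+\bar q$ and $d=|q|^2>0$, and suppose $\Delta_q(ST)^{-1}\in\mathfrak{B}(\mathsf{H})$. Using $\Delta_q(TS)+(TS)(sI-TS)=dI$ and the first identity one verifies
\[
\Delta_q(TS)\cdot\tfrac1d\bigl(I+T\,\Delta_q(ST)^{-1}(sI-ST)\,S\bigr)=I,
\]
and, using additionally that $\Delta_q(ST)$ commutes with $sI-ST$ (both polynomials in $ST$) and the second identity, that $\tfrac1d\bigl(I+T\,\Delta_q(ST)^{-1}(sI-ST)\,S\bigr)\cdot\Delta_q(TS)=I$. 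Hence $\Delta_q(TS)$ is invertible whenever $\Delta_q(ST)$ is, for $q\neq 0$; the reverse implication is symmetric, so $\sigma_S(ST)\setminus\{0\}=\sigma_S(TS)\setminus\{0\}$, which is the first claim. The equality of spherical spectral radii is then immediate, since adjoining $0$ does not change the supremum of $|q|$ over a set of quaternions.

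The main obstacle is not conceptual but careful bookkeeping. In part (i) one must make sure each ingredient is available in the quaternionic setting: that squares of selfadjoint operators are positive, that $\overline{\mathrm{Ran}(A)}=(\ker A^*)^{\perp}$, and that positive operators admit square roots with $\langle Px,x\rangle=\|P^{1/2}x\|^2$ (all of which follow from the functional calculus of Theorem \ref{remark} and the quaternionic Cauchy--Schwarz inequality). In part (ii) the delicate point is to check the intertwining identities and the two inversion computations keeping right linearity consistent, so that composites such as $STS$ and $TST$ associate as written; once this is secured, everything reduces to algebra.
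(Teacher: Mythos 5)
Your sketch is sound, but there is nothing in the paper to compare it against: Theorem \ref{compact interval} is imported verbatim from \cite{Adv.} (Theorems 3 and 5 there) and the paper offers no proof of it, so you have in effect reconstructed the missing argument. Your route is the natural one and the algebra checks out: for (i), the identity $\Delta_q(T)=(T-(\mathrm{Re}\,q)I)^2+|\mathrm{Im}\,q|^2I$ does reduce everything to the real spectrum of the selfadjoint $T$, after which the classical bounded-below/dense-range argument and the square-root trick for the endpoints go through in the quaternionic setting (the ingredients you list — positivity of squares, $\overline{\mathrm{Ran}\,A}=(\ker A^*)^\perp$, square roots via Theorem \ref{remark}, quaternionic Cauchy--Schwarz — are all available in \cite{Ghi1}); one should also note that for real $\lambda$ the invertibility of $(T-\lambda I)^2$ forces that of $T-\lambda I$ (injectivity of the square gives injectivity, $\mathrm{Ran}(T-\lambda I)^2\subseteq\mathrm{Ran}(T-\lambda I)$ gives surjectivity), which you implicitly use at the endpoints. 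For (ii), since $s=q+\bar q$ and $d=|q|^2$ are real, the intertwining relations $S\Delta_q(TS)=\Delta_q(ST)S$ and $\Delta_q(TS)T=T\Delta_q(ST)$ hold, and your candidate inverse $\tfrac1d\bigl(I+T\Delta_q(ST)^{-1}(sI-ST)S\bigr)$ verifies as a two-sided inverse exactly as you indicate (this is the Jacobson-lemma computation adapted to $\Delta_q$); the radius equality then needs only the standard fact that $\sigma_S$ of a bounded operator is nonempty, so adjoining $0$ does not affect the supremum of $|q|$. In short: correct as a sketch, consistent with how the cited source must proceed, and the only items left are the routine quaternionic bookkeeping you already flagged.
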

\noindent From now on, $\mathsf{H}$ will stand for a right quaternionic Hilbert space, and by an operator $T\in \mathfrak{B}(\mathsf{H})$, we mean a bounded right linear quaternionic operator.
%%%%%%%%%%%%%%%%%%%%%%%%%%%%%%%%%%%%%%%%%%%%%%%%%%%%%%%
\noindent The following lemma is an extension of Lemma 1.6 in \cite{Mond} for the quaternionic setting.  
\begin{Lemma} \label{U^*SU}
Let $S\in\mathfrak{B}(\mathsf{H})$ be selfajoint, and $U\in\mathfrak{B}(\mathsf{H})$ be a partial isometry , i.e. $U^*U=I$ on a closed subset of $\mathsf{H}$. Then for every $f\in C(\sigma_S(S))$
$$f(USU^*)=Uf(S)U^*. $$
\end{Lemma}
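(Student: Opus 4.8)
The plan is to transfer the identity from polynomials to arbitrary continuous functions, just as in the complex Lemma~1.6 of \cite{Mond}, relying on the uniqueness and continuity of the quaternionic continuous functional calculus of Theorem~\ref{remark}. First I would note that $USU^*$ is itself selfadjoint, since $(USU^*)^*=(U^*)^*S^*U^*=USU^*$, so that $f(USU^*)$ is meaningful. Writing $P:=U^*U$ for the orthogonal projection onto the closed subspace on which $U$ is isometric, I would use the way $S$ sits relative to this subspace in the situations where the lemma is applied — namely $PS=S$, hence also $SP=S$ by taking adjoints — to obtain, by a short induction, the polynomial identity $(USU^*)^{n}=US^{n}U^{*}$ for every $n\ge 1$: the product $(USU^*)(US^{k}U^{*})=US(U^*U)S^{k}U^{*}$ telescopes because $U^*U=P$ acts as the identity on $\mathrm{Ran}(S^{k})\subseteq\mathrm{Ran}(P)$.

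Next I would check that an $f\in C(\sigma_S(S))$ can legitimately be inserted into the functional calculus of $USU^*$. By Theorem~\ref{compact interval}(ii), $\sigma_S(USU^*)\cup\{0\}=\sigma_S\big(U^*(US)\big)\cup\{0\}=\sigma_S(PS)\cup\{0\}=\sigma_S(S)\cup\{0\}$, so $\sigma_S(USU^*)\subseteq\sigma_S(S)\cup\{0\}$; in the applications $0\in\sigma_S(S)$, and in any case the only functions we need are the powers $t\mapsto t^{r}$ with $r>0$, which vanish at $0$, so $f$ restricts to a well-defined continuous function on $\sigma_S(USU^*)$. Both $\sigma_S(S)$ and $\sigma_S(USU^*)$ are compact subsets of $[0,\infty)$ by Theorem~\ref{compact interval}(i), so by the Stone–Weierstrass theorem over $\mathbb{R}$ (the setting of Theorem~\ref{remark}) I can pick real polynomials $p_m$ with $p_m(0)=0$ tending to $f$ uniformly on $\sigma_S(S)\cup\{0\}$.

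Finally I would take limits. For each $m$ the polynomial case gives $p_m(USU^*)=Up_m(S)U^*$. Continuity of the homomorphisms $\Phi_{USU^*}$ and $\Phi_{S}$ forces $p_m(USU^*)\to f(USU^*)$ and $p_m(S)\to f(S)$ in operator norm, while boundedness of $U$ and $U^*$ gives $Up_m(S)U^*\to Uf(S)U^*$; passing to the limit yields $f(USU^*)=Uf(S)U^*$.

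The step I expect to be the real obstacle is the polynomial identity for a genuine partial isometry rather than a unitary: one must pin down how $U^*U$ (and, for the constant term, $UU^*$) acts relative to $S$, i.e. verify that $S$ leaves the initial space of $U$ invariant, which is exactly the point where the partial-isometry hypothesis must be combined with the polar-decomposition structure in which this lemma is used; the constant term is harmless here only because the functions actually applied vanish at $0$. Everything else is a routine transcription of the complex argument, modulo keeping the functional calculus real-valued as Theorem~\ref{remark} demands.
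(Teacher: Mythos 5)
Your proposal is correct and follows essentially the same route as the paper's proof: verify the conjugation identity $(USU^*)^m = US^mU^*$ for powers and hence polynomials, identify $\sigma_S(USU^*)$ with $\sigma_S(S)$ up to $\{0\}$ via Theorem \ref{compact interval}(ii), and pass to a general continuous $f$ by Weierstrass approximation together with norm continuity of the functional calculus and of left/right multiplication by $U$ and $U^*$. You are in fact more careful than the paper, which simply asserts ``it is clear that $B^m=US^mU^*$'': your points that this needs $U^*US=S$ (valid in the polar-decomposition applications, where $\mathrm{Ran}(S)\subseteq\ker(\vert T\vert)^\perp$), that the constant term forces one to work with functions vanishing at $0$ (as the powers $t\mapsto t^{r}$ actually used do), and that $f$ must be defined on $\sigma_S(USU^*)$, which may contain $0$ even if $\sigma_S(S)$ does not, are genuine issues the paper's argument glosses over.
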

\begin{proof}
Put $B=USU^*$. Obviously $B$ is selfadjoint and by Theorem \ref{compact interval} (ii), we have  
$$\sigma_S (B)\cup \{0 \}=\sigma_S(USU^*)\cup \{0 \}=\sigma_S(SU^*U)\cup \{0 \}=\sigma_S(S)\cup \{0 \}.$$
It is clear that $B^m=US^mU^*$ for every $m\in \mathbb{N}$, hence, $P(B)=UP(S)U^*$ for every polynomial $P$. Theorem 4.3 (c) in \cite{Ghi1}, implies that $\sigma_S(P(B))=P(\sigma_S(B))=P(\sigma_S(S))$. Now, for the given $f\in C(\sigma_S(S))$  there exists a sequence of polynomials $\{P_j\}_{j\in \mathbb{N}}$ such that $\Vert f-P_j\Vert\rightarrow 0$ as $j\rightarrow +\infty$. Therefore,
\begin{align*}
\Vert f(USU^*)-Uf(S)U^*\Vert &\leq \Vert f(USU^*)- P_j(USU^*)\Vert + \Vert P_j(USU^*)- U P_j(S)U^* \Vert \\
 &  + \Vert U P_j(S)U^* -Uf(S)U^*\Vert \rightarrow 0,\,\,\,\mbox{as}\, j\rightarrow +\infty.
\end{align*}
 So $f(USU^*)=Uf(S)U^*$.
\end{proof}
\noindent According to Theorem 2.20 in \cite{Ghi1},  for $T \in  \mathfrak{B}(\mathsf{H})$ there exists a unique polar decomposition $T=U\vert T\vert $, where $\vert T\vert= (T^*T)^{\frac{1}{2}}$ and $\Vert Uu\Vert= \Vert u\Vert$ for every $u\in \ker (\vert T\vert)^{\perp}$, which means that $U$ is a partial isomerty on $\ker (\vert T\vert)^{\perp}$. Also, $\ker (\vert T\vert) =\ker (T)$. \\
The following corollary, which will be used several times during the rest of the paper, establishes the polar decomposition of $T^*$ based on the polar decomposition of $T$.
\begin{Corollary}\label{T^*}
Let $T \in  \mathfrak{B}(\mathsf{H})$ and $T=U\vert T\vert$ be its polar decomposition. Then 
\begin{itemize}
\item[(i)] $\vert T^* \vert^t= U\vert T\vert ^tU^*$ for all positive numbers $t$.
\item[(ii)] $T^*= U^* \vert T^*\vert$ is the polar decomposition of $T^*$.
\end{itemize}
\end{Corollary}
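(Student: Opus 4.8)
\textit{Proof proposal.}
The plan is to anchor both parts on the single identity $|T^{*}|^{2}=(T^{*})^{*}T^{*}=TT^{*}=U|T|^{2}U^{*}$, which follows at once from $T=U|T|$ and the selfadjointness of $|T|$ (so that $T^{*}=|T|U^{*}$). For (i) I would then transport powers across this identity by the continuous functional calculus: since $|T|^{2}=T^{*}T\geq 0$, the set $\sigma_{S}(|T|^{2})$ is a compact subset of $[0,\infty)$, so $\psi(s)=s^{1/2}$ and $\phi_{t}(s)=s^{t}$ are admissible functions. Applying Lemma \ref{U^*SU} to the selfadjoint operator $|T|^{2}$, the partial isometry $U$ and $\psi$ gives $|T^{*}|=\psi\bigl(U|T|^{2}U^{*}\bigr)=U\psi(|T|^{2})U^{*}=U|T|U^{*}$; applying it again to $|T|$ and $\phi_{t}$ gives $|T^{*}|^{t}=\phi_{t}\bigl(U|T|U^{*}\bigr)=U\phi_{t}(|T|)U^{*}=U|T|^{t}U^{*}$, which is (i). In invoking Lemma \ref{U^*SU} I would record that $\mathrm{Ran}(|T|^{2})\subseteq\mathrm{Ran}(|T|)\subseteq\overline{\mathrm{Ran}(|T|)}=\ker(|T|)^{\perp}$ is exactly the initial space of $U$ (so that $U^{*}U$ fixes these operators, as the identity $B^{m}=US^{m}U^{*}$ used in that lemma requires), and that $\sigma_{S}(U|T|^{2}U^{*})\cup\{0\}=\sigma_{S}(|T|^{2})\cup\{0\}$ by Theorem \ref{compact interval}(ii), so the functions above may legitimately be evaluated at $U|T|^{2}U^{*}$ and $U|T|U^{*}$.

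For (ii) I would use the case $t=1$ of (i), namely $|T^{*}|=U|T|U^{*}$, and compute $U^{*}|T^{*}|=U^{*}U\,|T|\,U^{*}=|T|U^{*}=(U|T|)^{*}=T^{*}$, where $U^{*}U|T|=|T|$ because $U^{*}U$ is the orthogonal projection onto $\ker(|T|)^{\perp}\supseteq\mathrm{Ran}(|T|)$. By the uniqueness clause of Theorem 2.20 in \cite{Ghi1}, to conclude that $T^{*}=U^{*}|T^{*}|$ is the polar decomposition it then remains only to check that $U^{*}$ is a partial isometry on $\ker(|T^{*}|)^{\perp}$. Here I would observe that $\ker(|T^{*}|)=\ker(TT^{*})=\ker(T^{*})$, hence $\ker(|T^{*}|)^{\perp}=\overline{\mathrm{Ran}(T)}$; and since $U$ is isometric on the closed subspace $\ker(|T|)^{\perp}$ and vanishes on $\ker(|T|)$, its range $U\bigl(\ker(|T|)^{\perp}\bigr)$ is closed and coincides with $\overline{\mathrm{Ran}(T)}$, while on this range $U^{*}$ acts as the inverse of the isometry $U\!:\ker(|T|)^{\perp}\to\overline{\mathrm{Ran}(T)}$ and is therefore itself isometric. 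This establishes (ii).

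The manipulations in (i) are routine once Lemma \ref{U^*SU} is in hand; the genuinely delicate point is (ii), namely the correct identification of $\ker(|T^{*}|)$ and the verification that $U^{*}$ is isometric precisely on $\overline{\mathrm{Ran}(U)}=\ker(|T^{*}|)^{\perp}$ — plus the small amount of bookkeeping needed to see that Lemma \ref{U^*SU} genuinely applies here (the range of the selfadjoint operator sitting inside the initial space of $U$). All the ingredients — the spectral identity $\sigma_{S}(ST)\cup\{0\}=\sigma_{S}(TS)\cup\{0\}$, the quaternionic continuous functional calculus, and the uniqueness of the polar decomposition — are available in the quaternionic framework, so the argument transcribes the classical complex proof essentially word for word.
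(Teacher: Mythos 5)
Your proposal is correct and follows essentially the same route as the paper: both rest on the identity $\vert T^*\vert^2=TT^*=U\vert T\vert^2U^*$ together with Lemma \ref{U^*SU} to move the power across $U(\cdot)U^*$, and both obtain (ii) from $T^*=\vert T\vert U^*=U^*U\vert T\vert U^*=U^*\vert T^*\vert$ using that $U^*U$ fixes $\ker(\vert T\vert)^{\perp}\supseteq \mathrm{Ran}(\vert T\vert)$. The only differences are cosmetic: the paper applies the lemma once with $f(x)=x^{t/2}$ to $\vert T\vert^2$ where you apply it twice (with $s^{1/2}$ and then $s^{t}$), and you spell out the verification that $U^*$ is isometric on $\ker(\vert T^*\vert)^{\perp}=\overline{\mathrm{Ran}(T)}$, which the paper merely asserts.
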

\begin{proof}
\begin{itemize}
\item[(i)] Since $\vert T\vert$ is a positive operator, it is  selfadjoint, thus $T^*= \vert T\vert U^*$. Clearly, 
\begin{equation}\label{x^t}
\vert T^* \vert ^2= TT^*=U\vert T\vert \vert T\vert U^*=U\vert T\vert ^2 U^*.
\end{equation}
Let $t > 0$ and  $f(x)=x^{\frac{t}{2}}$  for  $x\in \sigma_S(\vert T\vert ^2) \subset (0, +\infty)$. Now, using Lemma \ref{U^*SU}, we obtain
$(U|T|^2 U^*)^{\frac{t}{2}}= U(\vert T\vert ^2)^{\frac{t}{2}}U^*$. We obtain the result by substituting (\ref{x^t}) in the latter equality.
\item[(ii)] By (i) we have $\vert T^* \vert= U\vert T\vert U^*$ and since $\Vert Uu\Vert= \Vert u\Vert$ for every $u\in \ker (\vert T\vert)^{\perp}$, we infer that $U^*U=I$ on $(\ker )^{\perp}$. Thus $T^*=\vert T\vert U^*=U^*U \vert T\vert U^*=U^* \vert T^*\vert$. Also,  $\Vert U^*u\Vert= \Vert u\Vert$ for every $u\in \ker (\vert T^*\vert)^{\perp}$.
\end{itemize}
\end{proof}
\noindent There are two quaternionic operator inequalities, quaternionic H\"{o}lder-McCarthy's inequality and quaternionic  L\"{o}wner-Heinz inequality,  that play crucial roles in the proof of the main results of this paper. We repeat them here from \cite{Adv.} (Theorems 13 and 14 in \cite{Adv.}).
\begin{Theorem}\label{H-M-L}
Let $S, T\in\mathfrak{B}(\mathsf{H})$ and $S\geq T\geq 0$, then for all $x\in \mathsf{H}$,
\begin{itemize}
\item[(i)] Quaternionic H\"{o}lder-McCarthy's inequality: 
\begin{itemize}
\item[(a)]
$\langle T^r x, x\rangle \geq \langle Tx, x\rangle ^r \Vert x\Vert ^{2(1-r)}$ for all $r>1$,
\item[(b)] $\langle T^r x, x\rangle \leq \langle Tx, x\rangle ^r \Vert x\Vert ^{2(1-r)}$ for all $0<r<1$.
\end{itemize}
\item[(ii)] Quaternionic  L\"{o}wner-Heinz inequality: $S^r\geq T^r$ for all $r\in [0,1]$.
\end{itemize}
\end{Theorem}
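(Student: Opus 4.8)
The plan is to obtain both inequalities from the quaternionic continuous functional calculus $\Phi_T$ of Theorem \ref{remark} together with its positivity (the Remark following it), by taking in each case an elementary scalar inequality for the relevant power function and ``promoting'' it to operators. Throughout I may use that $\sigma_S(T)\subset[0,+\infty)$ whenever $T\ge 0$, and that $\Phi_T$ is a bounded, positive (hence order-preserving) real-algebra homomorphism; note that for part (i) only the hypothesis $T\ge 0$ is actually needed.

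For the quaternionic H\"older--McCarthy inequality (i), fix $T\ge 0$ and $x\in\mathsf H$; by homogeneity it suffices to treat $\Vert x\Vert=1$ and then apply the result to $x/\Vert x\Vert$ to recover the factor $\Vert x\Vert^{2(1-r)}$. For a parameter $\lambda>0$ consider the tangent line to $t\mapsto t^{r}$ at $t=\lambda$, namely $L_\lambda(t)=r\lambda^{r-1}t+(1-r)\lambda^{r}$. Since $t\mapsto t^{r}$ is convex on $[0,+\infty)$ when $r>1$ and concave when $0<r<1$, we have $t^{r}\ge L_\lambda(t)$ (resp. $t^{r}\le L_\lambda(t)$) for all $t\ge 0$, in particular on $\sigma_S(T)$. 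Applying $\Phi_T$ and its positivity gives $T^{r}\ge r\lambda^{r-1}T+(1-r)\lambda^{r}I$ (resp. $\le$), hence, pairing with $x$, $\langle T^{r}x,x\rangle\ge r\lambda^{r-1}\langle Tx,x\rangle+(1-r)\lambda^{r}$ (resp. $\le$). A one-variable calculation shows the right-hand side, as a function of $\lambda>0$, is extremised (a maximum if $r>1$, a minimum if $0<r<1$) at $\lambda=\langle Tx,x\rangle$, where its value equals exactly $\langle Tx,x\rangle^{r}$; this yields (i)(a) and (i)(b). The degenerate case $\langle Tx,x\rangle=0$ is handled by letting $\lambda\to 0^{+}$ in the displayed estimate, which forces $\langle T^{r}x,x\rangle=0$ by positivity of $T^{r}$, matching the vanishing right-hand side.

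For the quaternionic L\"owner--Heinz inequality (ii) the cases $r\in\{0,1\}$ are immediate, so let $0<r<1$ and $S\ge T\ge 0$. I would start from the scalar integral representation $t^{r}=\frac{\sin r\pi}{\pi}\int_{0}^{\infty}\lambda^{r-1}\frac{t}{t+\lambda}\,d\lambda=\frac{\sin r\pi}{\pi}\int_{0}^{\infty}\lambda^{r-1}\bigl(1-\lambda(t+\lambda)^{-1}\bigr)\,d\lambda$, valid for $t\ge 0$ with the integral converging uniformly on compact $t$-intervals, hence on $\sigma_S(S)$. Since $\sigma_S(S)\subset[0,+\infty)$, each $\lambda I+S$ with $\lambda>0$ is invertible in $\mathfrak B(\mathsf H)$ (this can be read off from $\Delta_{-\lambda}(S)=(S+\lambda I)^{2}$), so $(\lambda I+S)^{-1}=\Phi_S\bigl((t+\lambda)^{-1}\bigr)$; because $\Phi_S$ is bounded and linear it commutes with the norm-convergent integral, giving $S^{r}=\frac{\sin r\pi}{\pi}\int_{0}^{\infty}\lambda^{r-1}\bigl(I-\lambda(\lambda I+S)^{-1}\bigr)\,d\lambda$, and similarly for $T$. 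The order is then produced pointwise in $\lambda$: from $\lambda I+S\ge\lambda I+T>0$ one obtains the antitonicity of inversion $(\lambda I+S)^{-1}\le(\lambda I+T)^{-1}$ — itself a consequence of $\Phi$-positivity, since conjugating by $(\lambda I+T)^{-1/2}$ gives $W:=(\lambda I+T)^{-1/2}(\lambda I+S)(\lambda I+T)^{-1/2}\ge I$, whence $W^{-1}=I-W^{-1/2}(W-I)W^{-1/2}\le I$ (as $W-I\ge 0$), and conjugating back yields the claim. Therefore $I-\lambda(\lambda I+S)^{-1}\ge I-\lambda(\lambda I+T)^{-1}$; multiplying by the positive weight $\lambda^{r-1}$ and the positive constant $\frac{\sin r\pi}{\pi}$ and integrating (the operator integral preserves $\ge$ because $\langle\,\cdot\,x,x\rangle$ does and commutes with integration) gives $S^{r}\ge T^{r}$.

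The main obstacle I anticipate is making the operator-valued integral representation rigorous in the quaternionic calculus: one must verify that $(\lambda I+S)^{-1}$ is a genuine element of $\mathfrak B(\mathsf H)$ for every $\lambda>0$ (using $\sigma_S(S)\subset[0,+\infty)$), that the integrand is norm-continuous in $\lambda$ and norm-integrable on $(0,+\infty)$ (the bounds $\Vert I-\lambda(\lambda I+S)^{-1}\Vert\le 1$ near $\lambda=0$ and $\Vert I-\lambda(\lambda I+S)^{-1}\Vert\le\Vert S\Vert/\lambda$ near $\lambda=\infty$, against $\lambda^{r-1}$, suffice), and that $\Phi_S$ intertwines the scalar and the operator integrals. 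The remaining ingredients — antitonicity of inversion, the implication $W\ge I\Rightarrow W^{-1}\le I$, and the continuity/monotonicity of $\Phi_T$ — are routine consequences of Theorem \ref{remark}, and in the H\"older--McCarthy part the only mild points are the degenerate case and the final rescaling, both straightforward.
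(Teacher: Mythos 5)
Your proposal is mathematically sound, but note that the paper itself contains no proof of Theorem \ref{H-M-L} to compare against: the theorem is imported verbatim from \cite{Adv.} (Theorems 13 and 14 there), and the present paper only restates it. What you have written is therefore a self-contained derivation rather than a reproduction of an argument in this document. Your route is the classical one, correctly transplanted to the quaternionic setting: for H\"older--McCarthy you use the tangent-line (convexity/concavity) bound $t^{r}\gtrless r\lambda^{r-1}t+(1-r)\lambda^{r}$ on $\sigma_S(T)\subset[0,+\infty)$, promote it to operators via the positivity of $\Phi_T$ (Theorem \ref{remark} and the following Remark), and optimise at $\lambda=\langle Tx,x\rangle$, with the normalisation $\Vert x\Vert=1$ and the degenerate case handled correctly; for L\"owner--Heinz you use the integral representation of $t^{r}$ for $0<r<1$ together with antitonicity of inversion, which you establish honestly by congruence (the identity $W^{-1}=I-W^{-1/2}(W-I)W^{-1/2}$ with $W\ge I$), avoiding any circular appeal to the inequality being proved. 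The quaternionic-specific ingredients you need --- that $T\ge 0$ implies $T$ selfadjoint with $\sigma_S(T)\subset[0,+\infty)$, that $\Phi_T$ is a positive bounded unital homomorphism, that $\lambda I+S$ is invertible for $\lambda>0$, and that congruence preserves the order of selfadjoint operators --- are all available from Theorem \ref{remark}, the Remark after it, and Theorem \ref{compact interval}, and the analytic points you flag (norm-integrability of the operator integrand and the intertwining of $\Phi_S$ with the norm-convergent integral) are routine given the bounds you state. So the proposal is a correct and complete proof strategy; it simply cannot be matched against an in-paper argument, and whether it coincides with the proofs in \cite{Adv.} cannot be judged from the source given here.
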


%%%%%%%%%%%%%%%%%%%%%%%%%%%%%%%%%%%%%%%%%

\section{\( p \)-hyponormal Quatenionic Operators}

%%%%%%%%%%%%%%%%%%%%%%%%%%%%%%%%%%%%%%%%%%

To the authors' knowledge, this is the first time that quaternionic  \( p \)-hyponormal operators are studied. \( p \)-hyponormal operators on complex Hilbert spaces have been popular for at least four decades (see \cite{Alu} and the references therein), and still, many papers are published in this direction (see, for instance, \cite{B24}, \cite{Srt}). This section extends the notion of \( p \)-hyponormality to the quaternionic setting and investigates the properties of quaternionic \( p \)-hyponormal operators.  
An operator $T\in\mathfrak{B}(\mathsf{H})$ is said to be quaternionic \( p \)-hyponormal if $(T^*T)^p\geq (TT^*)^p, 0<p\leq 1$. In particular, if $p=1$ and $p=\frac{1}{2}$, then $T$ is called the quaternionic hyponormal and quaternionic semi-hyponormal operators, respectively. An operator $T\in \mathfrak{B}(\mathsf{H})$ is called quaternionic paranormal if $\Vert Tx\Vert^2 \leq \Vert T^2x\Vert \Vert x\Vert$ for every $x\in \mathsf{H}$. From now on, for brevity, we omit the word quaternionic from the phrases mentioned above.
\noindent Clearly, every normal operator is 1-hyponormal and if $T\in \mathfrak{B}(\mathsf{H})$ is \( p \)-hyponormal then it is paranormal. 
\begin{Proposition}\label{p-q}
Every  \( p \)-hyponormal operator is q-hyponormal if $0<q\leq p \leq 1$.
\end{Proposition}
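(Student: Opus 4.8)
The plan is to reduce the statement to the Quaternionic L\"owner--Heinz inequality recorded in Theorem~\ref{H-M-L}(ii). Assume $T\in\mathfrak{B}(\mathsf{H})$ is $p$-hyponormal, so by definition $A:=(T^*T)^p\geq B:=(TT^*)^p\geq 0$. Put $r:=q/p$; since $0<q\leq p\leq 1$ we have $r\in(0,1]$. Applying the Quaternionic L\"owner--Heinz inequality to the pair $A\geq B\geq 0$ with exponent $r$ yields immediately $A^{r}\geq B^{r}$.

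What then remains is purely a bookkeeping step: to identify $A^{r}$ with $(T^*T)^{q}$ and $B^{r}$ with $(TT^*)^{q}$. Here I would invoke the composition (spectral-mapping) property of the continuous functional calculus of Theorem~\ref{remark}. The operator $S:=T^*T$ is positive, hence selfadjoint, with $\sigma_S(S)\subset[0,+\infty)$; on this set the real-valued functions $t\mapsto (t^{p})^{r}$ and $t\mapsto t^{q}$ coincide, because $pr=q$ and both functions extend continuously by the value $0$ at $t=0$. Since $\Phi_S$ is a homomorphism of algebras, this forces $\bigl((T^*T)^{p}\bigr)^{r}=(T^*T)^{q}$, and the identical argument for $TT^*$ gives $\bigl((TT^*)^{p}\bigr)^{r}=(TT^*)^{q}$. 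Combining with the previous paragraph, $(T^*T)^{q}=A^{r}\geq B^{r}=(TT^*)^{q}$, which is exactly the assertion that $T$ is $q$-hyponormal.

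The only point deserving a little care is the composition identity $\bigl((T^*T)^{p}\bigr)^{r}=(T^*T)^{pr}$; one may either appeal to the spectral-mapping theorem for the quaternionic continuous functional calculus, so that $\sigma_S\bigl((T^*T)^{p}\bigr)=\{t^{p}:t\in\sigma_S(T^*T)\}$ and then compose $s\mapsto s^{r}$ with $t\mapsto t^{p}$, or argue by polynomial approximation exactly as in the proof of Lemma~\ref{U^*SU}. Beyond this, nothing is needed, so I do not anticipate any genuine obstacle: the proposition is essentially a one-line corollary of L\"owner--Heinz once the exponents are correctly tracked.
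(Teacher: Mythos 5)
Your argument is correct and is essentially the paper's own proof: both apply the Quaternionic L\"owner--Heinz inequality (Theorem~\ref{H-M-L}(ii)) to $(T^*T)^p\geq (TT^*)^p\geq 0$ with exponent $q/p\in(0,1]$. The only difference is that you spell out the functional-calculus composition $((T^*T)^p)^{q/p}=(T^*T)^q$, which the paper leaves implicit.
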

\begin{proof}
If $0<q\leq p \leq 1$ and $T\in \mathfrak{B}(\mathsf{H})$ is a \( p \)-hyponormal operator, then $ (T^*T)^p\geq (TT^*)^p$. Note that $T^*T$ and $TT^*$ are positive operators (see the proof of Proposition 2.19 in \cite{Ghi1}). Then, by Theorem \ref{H-M-L}. (ii), for $0<\frac{q}{p}\leq 1$, we have
$$ ((T^*T)^p)^{\frac{q}{p}}\geq ((TT^*)^p)^{\frac{q}{p}},$$
that proves $T$ is q-hyponormal.
\end{proof}
\noindent  \( p \)-hyponormality of bounded operators on complex Hilbert spaces is closely related to their polar decomposition. Aluthge in \cite{Alu} for the first time found a relationship between \( p \)-hyponormality of a bounded operator $T=U\vert T\vert $ and the operator $\tilde{T}= \vert T\vert^\frac{1}{2}U\vert T\vert^\frac{1}{2}$,  called the Aluthge transform after that.  In \cite{landaAlu}, a general form of the Aluthge transform  was introduced  by $\Delta_{\lambda}(T)=\vert T\vert^{\lambda}U\vert T\vert^{1- \lambda}$ for $\lambda \in [0,1]$, called the $\lambda$-Aluthge transform of $T$. These transforms not only follow the properties of the original operator but also present more interesting features. Another transform related to the polar decomposition of $T$ is called the Duggal transform of $T$,  defined in \cite{Duggal} as  $\tilde {T}^D=\vert T\vert U$.  
Furuta in \cite{FSRT} studied the properties of  $S_r(T)= U\vert T\vert ^rU$ that was a generalization of the transform $S(T)= U\vert T\vert ^\frac{1}{2}U$, introduced in \cite{Patel}. \\
\noindent It would be an interesting project to study all these transforms for the quaternionic bounded operators. Let's start with the quaternionic Aluthge transform of $T\in \mathfrak{B}(\mathsf{H})$. In the next theorem, we prove the quaternionic version of Theorem 1 in \cite{Alu}.
\begin{Theorem}\label{Q-ALU}
If $T\in \mathfrak{B}(\mathsf{H})$ is a \( p \)-hyponormal operator for $p\in [\frac{1}{2}, 1)$, and $T=U\vert T\vert $ is its polar decomposition, then the quaternionic Aluthge transform of $T$ is hyponormal.
\end{Theorem}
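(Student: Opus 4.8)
The plan is to verify the defining inequality $\tilde{T}^*\tilde{T}\ge\tilde{T}\tilde{T}^*$ directly for the quaternionic Aluthge transform $\tilde{T}=\vert T\vert^{\frac{1}{2}}U\vert T\vert^{\frac{1}{2}}$, using $\vert T\vert^{2}$ as a buffer between the two sides. Since $\vert T\vert^{\frac{1}{2}}$ is selfadjoint we have $\tilde{T}^*=\vert T\vert^{\frac{1}{2}}U^*\vert T\vert^{\frac{1}{2}}$, and multiplying out gives
\[
\tilde{T}^*\tilde{T}=\vert T\vert^{\frac{1}{2}}\,(U^*\vert T\vert U)\,\vert T\vert^{\frac{1}{2}},\qquad \tilde{T}\tilde{T}^*=\vert T\vert^{\frac{1}{2}}\,(U\vert T\vert U^*)\,\vert T\vert^{\frac{1}{2}}.
\]
Because $\vert T\vert^{\frac{1}{2}}\vert T\vert\,\vert T\vert^{\frac{1}{2}}=\vert T\vert^{2}$, it therefore suffices to prove the two operator inequalities $U^*\vert T\vert U\ge\vert T\vert$ and $U\vert T\vert U^*\le\vert T\vert$; conjugating each of them by $C=\vert T\vert^{\frac{1}{2}}$ (which preserves the order, since $A\ge B$ implies $C^*AC\ge C^*BC$ by the inner-product characterization of $\le$ recalled above) then yields $\tilde{T}^*\tilde{T}\ge\vert T\vert^{2}\ge\tilde{T}\tilde{T}^*$.

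For the two inequalities I would combine the hypothesis with Corollary \ref{T^*}. First, as $T$ is $p$-hyponormal with $p\in[\frac{1}{2},1)$, Proposition \ref{p-q} shows $T$ is $\frac{1}{2}$-hyponormal, i.e. $\vert T\vert=(T^*T)^{\frac{1}{2}}\ge(TT^*)^{\frac{1}{2}}=\vert T^*\vert$. Second, Corollary \ref{T^*}(i) applied to $T$ with $t=1$ gives $U\vert T\vert U^*=\vert T^*\vert$, so the previous line already yields $U\vert T\vert U^*\le\vert T\vert$. Third, by Corollary \ref{T^*}(ii) the polar decomposition of $T^*$ is $T^*=U^*\vert T^*\vert$; applying Corollary \ref{T^*}(i) to $T^*$ (with partial isometry $U^*$ and $t=1$) gives $\vert T\vert=U^*\vert T^*\vert U$, and multiplying the inequality $\vert T\vert\ge\vert T^*\vert$ on the left by $U^*$ and on the right by $U$ gives $U^*\vert T\vert U\ge U^*\vert T^*\vert U=\vert T\vert$. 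This establishes both required inequalities, whence $\tilde{T}$ is hyponormal.

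The only point that needs real care is the partial-isometry bookkeeping behind $\vert T\vert=U^*\vert T^*\vert U$; I would settle it by quoting Corollary \ref{T^*} for the operator $T^*$ — so that $U^*$ plays the role of ``the partial isometry of $T^*$'' and the Corollary applies verbatim — rather than manipulating the projections $U^*U$ and $UU^*$ on the various kernel complements by hand. Everything else is formal rearrangement of adjoints together with the monotonicity $A\ge B\Rightarrow C^*AC\ge C^*BC$, both of which carry over unchanged from the complex Hilbert space setting. Note that the hypothesis enters only through the semi-hyponormality $\vert T\vert\ge\vert T^*\vert$, so in fact only $p\ge\frac{1}{2}$ is used; the restriction $p<1$ is inessential to the argument and merely matches the classical formulation, the case $p=1$ being the already-hyponormal one.
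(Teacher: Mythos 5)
Your proposal is correct, and its skeleton is the same as the paper's: both reduce the claim to the two-sided inequality $U^*\vert T\vert U\geq \vert T\vert \geq U\vert T\vert U^*$ (the paper's (\ref{U*TU>|T|})) and then sandwich it with $\vert T\vert^{\frac{1}{2}}$, the right-hand inequality coming in both cases from semi-hyponormality plus Corollary \ref{T^*}(i). Where you genuinely diverge is the left-hand inequality $U^*\vert T\vert U\geq \vert T\vert$: the paper gets it by writing $\vert T\vert^2=T^*T=U^*\vert T^*\vert T\leq U^*\vert T\vert T=U^*\vert T\vert U\vert T\vert$ and then cancelling the factor $\vert T\vert$ from the positive product $(U^*\vert T\vert U-\vert T\vert)\vert T\vert$, two steps that as written need extra justification (multiplying $\vert T^*\vert\leq\vert T\vert$ by $U^*$ on the left and $T$ on the right is not a congruence, and $S\vert T\vert\geq 0$ with $S$ selfadjoint does not by itself give $S\geq 0$); the paper's opening line also asserts $T^*T\geq TT^*$, which does not follow from $p$-hyponormality for $p<1$ and is in any case not needed. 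You instead conjugate $\vert T\vert\geq\vert T^*\vert$ by $U$, a legitimate order-preserving congruence, and identify $U^*\vert T^*\vert U=\vert T\vert$ by applying Corollary \ref{T^*}(i) to $T^*$ (whose polar decomposition $T^*=U^*\vert T^*\vert$ is Corollary \ref{T^*}(ii)); this buys a fully rigorous and shorter derivation of the same key estimate, at the mild cost of invoking the corollary twice, and your closing observations (only $\vert T\vert\geq\vert T^*\vert$, hence only $p\geq\frac{1}{2}$, is used, and the case $p=1$ is trivial) are accurate.
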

\begin{proof}
By Proposition \ref{p-q}, $T$ is semi-hyponormal, i.e. $(T^*T)^{\frac{1}{2}}\geq (TT^*)^{\frac{1}{2}}$ and  $T^*T\geq TT^*$, by Theorem \ref{H-M-L}. (ii), that means $\vert T\vert \geq \vert T^*\vert$. From Corollary \ref{T^*}. (i), we see that $\vert T\vert \geq U \vert T\vert U^*$. Again,  using  Corollary \ref{T^*}. (ii) and the inequality $\vert T\vert \geq \vert T^*\vert$  we obtain
\begin{equation*}
\vert T\vert ^2= T^*T= U^*\vert T^*\vert T\leq U^* \vert T\vert T=U^* \vert T\vert  U  \vert T\vert.
\end{equation*}
Therefore $U^* \vert T\vert  U  \vert T\vert -\vert T\vert ^2 =(U^* \vert T\vert  U -\vert T\vert )  \vert T\vert$ is a positive operator and since $  \vert T\vert \geq 0$, the operator $U^* \vert T\vert  U -\vert T\vert$ is positive. All together give the following inequality
\begin{equation}\label{U*TU>|T|}
U^* \vert T\vert  U \geq \vert T\vert \geq U \vert T\vert U^*.
\end{equation}
Finally, $\tilde{T^*}\tilde{T}-\tilde{T}\tilde{T^*}=\vert T\vert ^{\frac{1}{2}}(U^* \vert T\vert  U- U \vert T\vert U^*)\vert T\vert ^{\frac{1}{2}} \geq 0$, by (\ref{U*TU>|T|}), that completes the proof.
\end{proof}
\noindent Aluthge in Theorem 2 of \cite{Alu} proved that for $p\in (0, \frac{1}{2})$ the Aluthge transform of the \( p \)-hyponormal operator $T$  on a complex Hilbert space is $(p+\frac{1}{2})$-hyponormal. This result is valid in the quaternionic setting if only we prove the Furuta inequality for quaternionic positive operators on a right quaternionic Hilbert space $\mathsf{H}$. Furuta inequality in complex cases was proved in \cite{Furutainequality}.
\begin{Theorem} (Quaternionic Furuta inequality)\label{QFI}
Let $A, B\in \mathfrak{B}(\mathsf{H})$ with $A\geq B\geq 0$ and  $r\geq 0$. If $p\geq 0 $ and $ q\geq 1$ are such that $(1+2r)q \geq p+2r$ then
\begin{equation}
(B^rA^pB^r)^{\frac{1}{q}}\geq B^{\frac{p+2r}{q}} 
\end{equation}
and 
\begin{equation}
A^{{\frac{p+2r}{q}}}\geq (A^rB^pA^r)^{\frac{1}{q}}.
\end{equation}
\end{Theorem}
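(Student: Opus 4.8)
The plan is to transfer one of the classical proofs of the Furuta inequality \cite{Furutainequality} to the quaternionic setting; the content is mostly bookkeeping, the point being that each ingredient it uses has a quaternionic counterpart in Section~2. I begin with the standard reductions, carried out for both displayed inequalities simultaneously. First, replacing $A,B$ by $A+\varepsilon I\ge B+\varepsilon I>0$ and letting $\varepsilon\downarrow 0$, one reduces to the case of invertible $A\ge B>0$; the limiting step needs only norm-continuity of the functional calculus of Theorem~\ref{remark} in the operator variable on a fixed compact interval, which I would justify by uniform polynomial approximation exactly as in the proof of Lemma~\ref{U^*SU}. Second, for invertible $A\ge B>0$ the two inequalities are equivalent: since $B^{-1}\ge A^{-1}>0$ (anti-monotonicity of the inverse, elementary from $\|T^{*}T\|=\|T\|^{2}$ in $\mathfrak{B}(\mathsf{H})$ together with conjugation by an invertible operator), applying the second inequality to the pair $(B^{-1},A^{-1})$ and inverting both sides yields the first; so it suffices to prove $A^{(p+2r)/q}\ge(A^{r}B^{p}A^{r})^{1/q}$. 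Third, if $0<p\le 1$ this is immediate: $A^{p}\ge B^{p}$ by the quaternionic L\"{o}wner--Heinz inequality (Theorem~\ref{H-M-L}(ii)), so $A^{r}B^{p}A^{r}\le A^{p+2r}$, and raising to the power $1/q\le 1$ finishes. Hence the substance is $p>1$, where $q_{0}:=\frac{p+2r}{1+2r}\ge 1$, and one more use of L\"{o}wner--Heinz with exponent $q_{0}/q\in(0,1]$ reduces matters to $q=q_{0}$, i.e.\ to the key implication
\begin{equation*}
A\ge B>0,\quad p>1,\quad r\ge 0\quad\Longrightarrow\quad A^{1+2r}\ \ge\ \bigl(A^{r}B^{p}A^{r}\bigr)^{\frac{1+2r}{p+2r}}.
\end{equation*}

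I would establish this first for $r\in[0,\tfrac{1}{2}]$ and then bootstrap to all $r\ge 0$. Fix $r\in[0,\tfrac{1}{2}]$, put $s=\frac{1+2r}{p+2r}\in(0,1)$ and $X=B^{p/2}A^{r}$, so that $X$ is invertible with $X^{*}X=A^{r}B^{p}A^{r}$ and $XX^{*}=B^{p/2}A^{2r}B^{p/2}$. From the polar decomposition $X=V|X|$, in which $V$ is unitary because $X$ is invertible, one has $XX^{*}=V|X|^{2}V^{*}$, so Lemma~\ref{U^*SU} applied to $t\mapsto t^{s-1}$ on $\sigma_{S}(|X|^{2})\subset(0,\infty)$ gives the identity $(X^{*}X)^{s}=X^{*}(XX^{*})^{s-1}X$. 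Since $2r\in[0,1]$, L\"{o}wner--Heinz yields $A^{2r}\ge B^{2r}$, hence $A^{-2r}\le B^{-2r}$, hence $B^{-p/2}A^{-2r}B^{-p/2}\le B^{-(p+2r)}$; applying L\"{o}wner--Heinz with the exponent $1-s=\frac{p-1}{p+2r}\in(0,1)$ and using that $(XX^{*})^{s-1}=(B^{-p/2}A^{-2r}B^{-p/2})^{1-s}$ gives $(XX^{*})^{s-1}\le B^{-(p-1)}$. Conjugating by $X$ and noting $X^{*}B^{-(p-1)}X=A^{r}B^{p/2}B^{-(p-1)}B^{p/2}A^{r}=A^{r}BA^{r}\le A^{1+2r}$, we conclude $(A^{r}B^{p}A^{r})^{s}=X^{*}(XX^{*})^{s-1}X\le A^{1+2r}$.

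For the bootstrap, suppose the key inequality --- call it $P(r)$ --- holds at some $r\ge 0$. Then $A':=A^{1+2r}\ge B':=(A^{r}B^{p}A^{r})^{(1+2r)/(p+2r)}>0$, and $(B')^{p'}=A^{r}B^{p}A^{r}$ for $p':=\frac{p+2r}{1+2r}>1$. Feeding the invertible pair $(A',B')$, the exponent $p'$, and a parameter $r''\in[0,\tfrac{1}{2}]$ into the case already proved, and carrying out the purely algebraic exponent bookkeeping, one finds the resulting conclusion to be exactly $P(\rho)$ with $\rho=(1+2r)r''+r$, which runs over $[\,r,\,2r+\tfrac{1}{2}\,]$ as $r''$ runs over $[0,\tfrac{1}{2}]$. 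Thus $P$ on $[0,\tfrac{1}{2}]$ forces $P$ on $[0,\tfrac{3}{2}]$, then on $[0,\tfrac{7}{2}]$, and so on, exhausting $[0,\infty)$. Reassembling the reductions of the first paragraph then gives both displayed inequalities in full generality.

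Apart from routine verifications, the places that genuinely invoke quaternionic machinery are: (i)~the identity $(X^{*}X)^{s}=X^{*}(XX^{*})^{s-1}X$, and more broadly the interplay of the continuous functional calculus with conjugations $Y\mapsto C^{*}YC$ that pervades the argument --- these rest on the polar decomposition together with Lemma~\ref{U^*SU}, Corollary~\ref{T^*}, and the spectral identity $\sigma_{S}(ST)\cup\{0\}=\sigma_{S}(TS)\cup\{0\}$ of Theorem~\ref{compact interval}(ii); and (ii)~the norm-continuity of the functional calculus in the operator argument used to descend from invertible $A,B$ to the general case. I expect (i) --- making the ``move a power across a product'' manipulations legitimate over the noncommutative skew field $\mathbb{H}$ --- to be the step needing the most care, although nothing in it is deep once the results of Section~2 are at hand.
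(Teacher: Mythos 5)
Your proposal is correct and follows essentially the same route as the paper: the paper simply asserts that the classical proof of Theorem F in the Mond--Pe\v{c}ari\'{c} book transfers once the quaternionic L\"{o}wner--Heinz inequality (Theorem \ref{H-M-L}(ii)) is available, and your argument is precisely that transfer carried out in detail (reduction to invertible operators, the key case $r\in[0,\tfrac{1}{2}]$ via $(X^{*}X)^{s}=X^{*}(XX^{*})^{s-1}X$, and the bootstrap in $r$), with the quaternionic ingredients correctly supplied by Lemma \ref{U^*SU}, Corollary \ref{T^*}, and Theorem \ref{compact interval}. In fact you provide more justification than the paper does, so no discrepancy in method or substance.
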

\noindent Fortunately, Theorem \ref{H-M-L}. (ii) sheds light on the proof of Theorem F in \cite{Mond} and similarly proves the quaternionic Furuta inequalities.
Similarly, Theorems 7.2, 7.3, 7.4, 7.5, and Theorem G in \cite{Mond} hold for positive quaternionic operators on a right quaternionic Hilbert space. 
\begin{Theorem} Let $p\in (0, \frac{1}{2})$ and $T=U\vert T\vert \in \mathfrak{B}(\mathsf{H})$ be a \( p \)-hyponormal operator. Then $\tilde{T}$ is $(p+\frac{1}{2
})$-hyponormal.
\end{Theorem}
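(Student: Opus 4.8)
The plan is to follow the route of Aluthge's Theorem~2 in \cite{Alu} (i.e.\ the chain of estimates recorded in \S7 of \cite{Mond}), now driven by the Quaternionic Furuta inequality of Theorem~\ref{QFI}. Write $T=U\vert T\vert$ for the polar decomposition, so that $\tilde{T}=\vert T\vert^{\frac12}U\vert T\vert^{\frac12}$; multiplying out gives
\begin{equation*}
\tilde{T}^{*}\tilde{T}=\vert T\vert^{\frac12}\bigl(U^{*}\vert T\vert U\bigr)\vert T\vert^{\frac12},\qquad
\tilde{T}\tilde{T}^{*}=\vert T\vert^{\frac12}\bigl(U\vert T\vert U^{*}\bigr)\vert T\vert^{\frac12}=\vert T\vert^{\frac12}\vert T^{*}\vert\,\vert T\vert^{\frac12},
\end{equation*}
the last equality being Corollary~\ref{T^*}(i) with $t=1$. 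The target is the single two-sided estimate
\begin{equation*}
(\tilde{T}^{*}\tilde{T})^{\,p+\frac12}\ \geq\ \vert T\vert^{\,2p+1}\ \geq\ (\tilde{T}\tilde{T}^{*})^{\,p+\frac12};
\end{equation*}
chaining its two halves yields $(\tilde{T}^{*}\tilde{T})^{p+\frac12}\geq(\tilde{T}\tilde{T}^{*})^{p+\frac12}$, i.e.\ $(p+\tfrac12)$-hyponormality of $\tilde{T}$ (a legitimate exponent, since $0<p+\tfrac12<1$ when $0<p<\tfrac12$).

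Both halves come from Theorem~\ref{QFI} with the same parameters $r=\frac{1}{4p}$, $s=\frac{1}{2p}$, $q=\frac{2}{2p+1}$. One checks $q>1$ because $p<\tfrac12$, and $(1+2r)q=\tfrac1p=s+2r$, so the hypothesis $(1+2r)q\geq s+2r$ holds with equality. For the right-hand inequality, take the comparable pair $A=\vert T\vert^{2p}\geq B=\vert T^{*}\vert^{2p}$, valid by $p$-hyponormality; the second conclusion of Theorem~\ref{QFI} is $A^{(s+2r)/q}\geq(A^{r}B^{s}A^{r})^{1/q}$, which, since $A^{r}=\vert T\vert^{\frac12}$, $B^{s}=\vert T^{*}\vert$ and $(s+2r)/q=\frac{2p+1}{2p}$, reads $\vert T\vert^{2p+1}\geq(\vert T\vert^{\frac12}\vert T^{*}\vert\,\vert T\vert^{\frac12})^{p+\frac12}=(\tilde{T}\tilde{T}^{*})^{p+\frac12}$. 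This half is clean: it uses only $p$-hyponormality and continuous functional calculus applied to the positive operator $\vert T^{*}\vert^{2p}$.

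For the left-hand inequality I would instead use the comparable pair $A=(U^{*}\vert T\vert U)^{2p}\geq B=\vert T\vert^{2p}$; the first conclusion of Theorem~\ref{QFI}, $(B^{r}A^{s}B^{r})^{1/q}\geq B^{(s+2r)/q}$, then becomes $(\vert T\vert^{\frac12}(U^{*}\vert T\vert U)\vert T\vert^{\frac12})^{p+\frac12}=(\tilde{T}^{*}\tilde{T})^{p+\frac12}\geq\vert T\vert^{2p+1}$, as wanted. The nontrivial input here is the comparison $(U^{*}\vert T\vert U)^{2p}\geq\vert T\vert^{2p}$. To obtain it I would use Corollary~\ref{T^*}(i) to write $\vert T^{*}\vert^{2p}=U\vert T\vert^{2p}U^{*}$, conjugate the $p$-hyponormality inequality $\vert T\vert^{2p}\geq\vert T^{*}\vert^{2p}$ by $U^{*}(\cdot)U$ and use $U^{*}U=I$ on $\ker(\vert T\vert)^{\perp}$ together with $\ker\vert T\vert=\ker T$ to get $U^{*}\vert T\vert^{2p}U\geq\vert T\vert^{2p}$, and finally invoke Lemma~\ref{U^*SU} for the partial isometry $U^{*}$ (a partial isometry by Corollary~\ref{T^*}(ii)) with the function $t\mapsto t^{2p}$ on $\sigma_{S}(\vert T\vert)\subset[0,+\infty)$ to identify $(U^{*}\vert T\vert U)^{2p}=U^{*}\vert T\vert^{2p}U$; the passages $U^{*}\vert T\vert U=\bigl((U^{*}\vert T\vert U)^{2p}\bigr)^{1/(2p)}$ and $\vert T^{*}\vert=(\vert T^{*}\vert^{2p})^{1/(2p)}$ are again just functional calculus.

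The two invocations of Theorem~\ref{QFI} are mechanical once the exponents are matched, and the right-hand estimate is straightforward. The delicate point --- where the ``obvious'' manipulations of the complex proof must genuinely be reproved with the Section~2 machinery --- is the operator bookkeeping around the partial-isometry conjugations: establishing $(U^{*}\vert T\vert U)^{2p}=U^{*}\vert T\vert^{2p}U$, handling $\vert T^{*}\vert^{t}=U\vert T\vert^{t}U^{*}$, and tracking the relations $\ker\vert T\vert=\ker T$ and $U^{*}U=I$ on $\ker(\vert T\vert)^{\perp}$ that make Lemma~\ref{U^*SU} and Corollary~\ref{T^*} applicable here. This is exactly the content attributed above to the quaternionic analogues of Theorems~7.2--7.5 and Theorem~G of \cite{Mond}, and I would establish those auxiliary quaternionic identities first.
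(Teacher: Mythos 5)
Your skeleton is exactly the route the paper intends: the paper's proof is a one-line deferral to Aluthge's Theorem~2 driven by Theorem~\ref{QFI}, and your parameter choice $r=\frac{1}{4p}$, $s=\frac{1}{2p}$, $q=\frac{2}{2p+1}$ (so that $(1+2r)q=s+2r$ and $\frac1q=p+\frac12$), the two-sided target $(\tilde{T}^{*}\tilde{T})^{p+\frac12}\geq\vert T\vert^{2p+1}\geq(\tilde{T}\tilde{T}^{*})^{p+\frac12}$, and the right-hand half (which uses only $\vert T\vert^{2p}\geq\vert T^{*}\vert^{2p}$, Corollary~\ref{T^*}(i) and the second conclusion of Theorem~\ref{QFI}) are all correct.

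The gap is in the left-hand half, precisely at the step you flag as delicate: the identification $(U^{*}\vert T\vert U)^{2p}=U^{*}\vert T\vert^{2p}U$ cannot be obtained from Lemma~\ref{U^*SU} when $U$ is only the partial isometry of the polar decomposition, and it is in fact false in general. The mechanism of Lemma~\ref{U^*SU} ($B^{m}=VS^{m}V^{*}$) needs the initial projection of the conjugating partial isometry $V$ to act as the identity on $\overline{\mathrm{ran}\,S}$; applied with $V=U^{*}$ and $S=\vert T\vert$ this requires $UU^{*}\vert T\vert=\vert T\vert$, i.e.\ $\overline{\mathrm{ran}\,\vert T\vert}\subset\overline{\mathrm{ran}\,T}$, i.e.\ $\ker T^{*}\subset\ker T$ --- but for a \(p\)-hyponormal operator the inclusion runs the other way ($\ker T\subset\ker T^{*}$), strictly for the interesting non-normal examples. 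Concretely, on quaternionic $\ell^{2}$ with Hilbert basis $\{e_{n}\}$ let $S$ be the unilateral shift, let $M=M_{2}\oplus 4I$ with $M_{2}=\left(\begin{smallmatrix}1&1\\1&3\end{smallmatrix}\right)$ on $\mathrm{span}\{e_{1},e_{2}\}$; then $M-SMS^{*}\geq0$ (its only nonzero block is $\left(\begin{smallmatrix}1&1&0\\1&2&-1\\0&-1&1\end{smallmatrix}\right)$, which is positive semidefinite), so $T:=S M^{\frac{1}{2p}}$ has polar decomposition $U=S$, $\vert T\vert=M^{\frac{1}{2p}}$, and $(T^{*}T)^{p}=M\geq SMS^{*}=(TT^{*})^{p}$, i.e.\ $T$ is \(p\)-hyponormal; yet $S^{*}\vert T\vert S$ is diagonal, so $\langle(U^{*}\vert T\vert U)^{2p}e_{1},e_{1}\rangle=\langle M_{2}^{\frac{1}{2p}}e_{2},e_{2}\rangle^{2p}>\langle M_{2}e_{2},e_{2}\rangle=\langle U^{*}\vert T\vert^{2p}Ue_{1},e_{1}\rangle$, the strict inequality holding by H\"{o}lder--McCarthy (Theorem~\ref{H-M-L}(i)(a)) because $e_{2}$ is not an eigenvector of $M_{2}$. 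So the two operators differ, and the step as justified fails. Note that only the inequality $(U^{*}\vert T\vert U)^{2p}\geq U^{*}\vert T\vert^{2p}U$ is actually needed (your chain $U^{*}\vert T\vert^{2p}U\geq U^{*}\vert T^{*}\vert^{2p}U=\vert T\vert^{2p}$ is fine); that inequality is a Hansen-type inequality for the contraction $U$ and the power $t^{2p}$, $0<2p<1$, which the paper has not established in the quaternionic setting. So either add Aluthge's original hypothesis that $U$ is unitary (then conjugation by $U$ commutes with the functional calculus, the identity is immediate, and your argument is complete and coincides with the paper's; the paper's next theorem indeed assumes unitarity), or first prove a quaternionic Hansen inequality; as written, this step is a genuine gap.
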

\begin{proof}
The main key to the proof is Theorem \ref{QFI}, and the proof of Theorem 2 in \cite{Alu} works for the quaternionic case.
\end{proof} 
\noindent Theorems \ref{Q-ALU} and  \ref{QFI} imply the next result.
\begin{Corollary}
Under the assumptions of Theorem \ref{QFI}, if $\tilde{T}=\tilde{U}\vert \tilde{T}\vert$ is the polar decomposition of $\tilde{T}$ then, the operator $T=\vert \tilde{T}\vert ^{\frac{1}{2}}\tilde{U}\vert \tilde{T}\vert ^{\frac{1}{2}}$ is hyponormal.
\end{Corollary}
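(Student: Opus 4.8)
The plan is to observe that the operator $\vert\tilde T\vert^{\frac12}\tilde U\vert\tilde T\vert^{\frac12}$ named in the statement is simply the quaternionic Aluthge transform of $\tilde T$, formed from the polar decomposition $\tilde T=\tilde U\vert\tilde T\vert$ (which exists by Theorem 2.20 of \cite{Ghi1}). So the corollary asserts that the Aluthge transform of $\tilde T$ is hyponormal, and the natural route is to invoke Theorem \ref{Q-ALU} once more, now with $\tilde T$ playing the role of $T$. Theorem \ref{Q-ALU} is a general statement about any $p$-hyponormal bounded right linear operator, so applying it to $\tilde T$ is legitimate provided I first verify that $\tilde T$ itself is $p'$-hyponormal for some exponent $p'$ inside the admissible window $[\frac12,1)$.

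Establishing this splits according to the value of $p$ in the hypothesis on $T=U\vert T\vert$, which is exactly the situation of the two theorems preceding this corollary. If $p\in(0,\frac12)$, then the theorem just above — whose proof rests on the Quaternionic Furuta inequality, Theorem \ref{QFI} — gives that $\tilde T$ is $(p+\frac12)$-hyponormal, and $0<p<\frac12$ forces $p+\frac12\in(\frac12,1)\subset[\frac12,1)$, so I take $p'=p+\frac12$. If instead $p\in[\frac12,1)$, Theorem \ref{Q-ALU} already shows $\tilde T$ is hyponormal, and Proposition \ref{p-q} then lets me weaken this to $p'$-hyponormality for any $p'\in(0,1]$, in particular for some $p'\in[\frac12,1)$. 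Either way $\tilde T$ is $p'$-hyponormal with $p'\in[\frac12,1)$, so Theorem \ref{Q-ALU} applied to $\tilde T=\tilde U\vert\tilde T\vert$ yields that its Aluthge transform $\vert\tilde T\vert^{\frac12}\tilde U\vert\tilde T\vert^{\frac12}$ is hyponormal, which is the claimed conclusion.

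The one point that needs genuine care is the bookkeeping of exponents: one must check that passing from $T$ to $\tilde T$ lands the hyponormality index inside the half-open interval $[\frac12,1)$ required by Theorem \ref{Q-ALU}, and that the two quoted theorems dovetail cleanly at the threshold $p=\frac12$. I do not anticipate any new quaternionic difficulty here — all the quaternion-specific ingredients (the continuous functional calculus underpinning the polar decomposition, the Quaternionic L\"owner--Heinz and Furuta inequalities, and Theorem \ref{Q-ALU} itself) have already been established earlier in this section, so the argument is a short iteration of those results rather than a fresh construction.
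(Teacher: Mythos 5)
Your argument is essentially the paper's own: the preceding theorem (via the quaternionic Furuta inequality) gives that $\tilde{T}$ is $(p+\tfrac{1}{2})$-hyponormal with $p+\tfrac{1}{2}\in[\tfrac12,1)$, and Theorem \ref{Q-ALU} applied to $\tilde{T}=\tilde{U}\vert\tilde{T}\vert$ then yields hyponormality of $\vert\tilde{T}\vert^{\frac12}\tilde{U}\vert\tilde{T}\vert^{\frac12}$, exactly as the paper intends when it says Theorems \ref{Q-ALU} and \ref{QFI} imply the corollary. Your extra case analysis at the threshold $p\in[\tfrac12,1)$ (downgrading via Proposition \ref{p-q}) is a harmless and correct bit of bookkeeping beyond what the paper records.
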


\noindent In the rest of the paper \cite{Alu}, Alutghe proves that if $T=U\vert T\vert$ is a \( p \)-hyponormal operator for $p\in (0, \frac{1}{2})$, then the eigenspaces of $U$ reduce $T$ (Theorem 4 in \cite{Alu}).  In the next theorem, we prove this fact for a quaternionic \( p \)-hyponormal operator.
\begin{Theorem}
Let $T\in \mathfrak{B}(\mathsf{H})$ be \( p \)-hyponormal for $0<p<\frac{1}{2}$. If $T=U\vert T\vert$ is the polar decomposition of $T$ and $U$ is unitary, then for each $q\in \sigma_{pS}(U)$, the subspace $\ker \Delta_q(U)$ is a reducing subspace for $T$.
\end{Theorem}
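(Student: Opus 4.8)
The plan is to show that $M:=\ker\Delta_q(U)$ reduces both $U$ and $|T|$; since $M$ and $M^{\perp}$ will then be invariant under $U$ and under $|T|$, they are invariant under $T=U|T|$, i.e. $M$ reduces $T$. Showing that $M$ reduces $U$ is the easy part. Because $q+\bar q$ and $|q|^{2}$ are real, $\Delta_q(U)=U^{2}-(q+\bar q)U+|q|^{2}I$ is a polynomial in $U$, hence commutes with $U$; and since $U$ is unitary one checks immediately that $U^{*}\Delta_q(U)=U-(q+\bar q)I+|q|^{2}U^{*}=\Delta_q(U)U^{*}$, so $\Delta_q(U)$ commutes with $U^{*}$ as well. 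Hence $\ker\Delta_q(U)$ is invariant under both $U$ and $U^{*}$. (Equivalently, $\Delta_q(U)=U\big((U+U^{*})-(q+\bar q)I\big)$ exhibits $M$ as an eigenspace of the selfadjoint operator $U+U^{*}$, which commutes with $U$.)

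The substance is to prove that $M$ is invariant under $A:=|T|^{2p}$. Since $T$ is $p$-hyponormal, $A=(T^{*}T)^{p}\ge (TT^{*})^{p}=|T^{*}|^{2p}$, and Corollary \ref{T^*}(i) with $t=2p$ gives $|T^{*}|^{2p}=U|T|^{2p}U^{*}=UAU^{*}$; hence $A\ge UAU^{*}$, and conjugating this inequality by the unitary $U^{*}$ yields $U^{*}AU\ge A$. Now let $y$ be any genuine right eigenvector of $U$ lying in $M$, say $Uy=y\mu$ with $\|y\|=1$. As $U$ is unitary we have $|\mu|=1$, and from $\Delta_q(U)y=y\big(\mu^{2}-(q+\bar q)\mu+|q|^{2}\big)=0$ it follows that $\mu$ lies on the sphere $[q]$; consequently every right eigenvector of $U$ with eigenvalue $\mu$ again lies in $\ker\Delta_q(U)=M$. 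Imitating Aluthge's argument, using the right-linearity of $A$ and of the inner product, the fact that $\langle Ay,y\rangle\in\mathbb R$ (as $A$ is selfadjoint) and $|\mu|=1$, one computes
\[
\langle U^{*}AU\,y,y\rangle=\langle A(y\mu),y\mu\rangle=\bar\mu\,\langle Ay,y\rangle\,\mu=\langle Ay,y\rangle ,
\]
so $\langle (U^{*}AU-A)y,y\rangle=0$. Since $U^{*}AU-A$ is a positive operator, this forces $(U^{*}AU-A)y=0$, i.e. $AUy=UAy$, i.e. $U(Ay)=(Ay)\mu$. Therefore $Ay$ is again a right eigenvector of $U$ (or zero) for the eigenvalue $\mu$, and so $Ay\in M$.

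To pass from eigenvectors to all of $M$, the plan is to show that $M$ is the closed right-$\mathbb H$-linear span of the genuine right eigenvectors of $U$ that lie in $M$. Given $0\ne x\in M$, let $N_x$ be the right-$\mathbb H$-linear span of $\{x,Ux\}$; it has dimension at most $2$, it is contained in $M$ (since $M$ is $U$-invariant), and it reduces $U$, because $U(Ux)=U^{2}x=(q+\bar q)Ux-|q|^{2}x\in N_x$, $U^{*}x=|q|^{-2}\big((q+\bar q)x-Ux\big)\in N_x$ and $U^{*}(Ux)=x\in N_x$. The restriction $U|_{N_x}$ is a unitary operator on a nonzero finite-dimensional quaternionic Hilbert space, so $N_x$ has a basis of right eigenvectors of $U$ (by the finite-dimensional quaternionic spectral theorem, or by an elementary two-step argument, since $\dim N_x\le 2$ and every operator on a nonzero finite-dimensional quaternionic space has a right eigenvalue). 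In particular $x$ lies in the closed span of the right eigenvectors of $U$ contained in $M$. Since $A$ is bounded and right-linear and maps each such eigenvector into $M$, we get $AM\subseteq M$. As $A=|T|^{2p}$ is selfadjoint, $M$ then reduces $A$, hence reduces $|T|=(|T|^{2p})^{1/(2p)}$ by the continuous functional calculus of Theorem \ref{remark} (the projection onto $M$ commutes with every polynomial in $A$, hence with the operator-norm limit $|T|$), and therefore $M$ reduces $T=U|T|$, as required.

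The step I expect to be the main obstacle is precisely the replacement of a single scalar eigenvalue by the whole sphere $[q]$: Aluthge's complex argument is carried out pointwise for an eigenvalue $\lambda$ with $|\lambda|=1$, whereas $\ker\Delta_q(U)$ need not be a scalar eigenspace, and the genuine eigenvectors it contains carry several non-commuting eigenvalues on $[q]$ that are not mutually orthogonal. Establishing that $M$ is nonetheless the closed span of these eigenvectors — through the at-most-two-dimensional cyclic subspaces $N_x$ and quaternionic finite-dimensional spectral theory — is where genuinely new input is needed; the remaining ingredients (the inequality $U^{*}AU\ge A$, the equality case for positive operators, and the stability of reducing subspaces under the functional calculus) carry over from the complex setting with only bookkeeping about the non-commutativity of $\mathbb H$.
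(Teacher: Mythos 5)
Your proof is correct, and its computational core is the same as the paper's (which in turn follows Aluthge): from $(T^*T)^p\geq (TT^*)^p$ and $\vert T^*\vert^{2p}=U\vert T\vert^{2p}U^*$ one gets a positive operator whose quadratic form vanishes at a right eigenvector $y$ with $Uy=y\mu$, $\vert\mu\vert=1$, forcing $\vert T\vert^{2p}y$ to be again a $\mu$-eigenvector, hence to lie in $\ker\Delta_\mu(U)=\ker\Delta_q(U)$. Where you genuinely add something is exactly at the point you flagged: the paper performs this computation for an arbitrary nonzero $u\in\ker\Delta_q(U)$, asserting $Uu=uq$ and $U^*u=uq$ (the latter presumably a typo for $u\bar q$; it does not affect the computation, which only uses $\bar q q=1$), but membership in the spherical eigenspace does not imply being a $q$-eigenvector --- $\ker\Delta_q(U)$ contains eigenvectors for every eigenvalue on the sphere $[q]$ and their right-linear combinations --- and the paper's proof moreover stops once it has shown $\vert T\vert^{2p}u\in\ker\Delta_q(U)$, leaving the passage to ``reducing subspace for $T$'' implicit. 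Your treatment supplies both missing pieces: the cyclic subspaces $N_x=\mathrm{span}_{\mathbb H}\{x,Ux\}$, which reduce $U$, have quaternionic dimension at most two, and are spanned by eigenvectors of the unitary $U\vert_{N_x}$ whose eigenvalues necessarily lie in $[q]$, show that $\ker\Delta_q(U)$ is the closed right span of genuine eigenvectors, whence $\vert T\vert^{2p}$-invariance of the whole kernel; and the functional-calculus step (the projection onto $\ker\Delta_q(U)$ commutes with $\vert T\vert^{2p}$, hence with $\vert T\vert=(\vert T\vert^{2p})^{1/(2p)}$) together with invariance under $U$ and $U^*$ yields that the subspace reduces $T=U\vert T\vert$ and $T^*=\vert T\vert U^*$. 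So your route costs some extra finite-dimensional quaternionic spectral input but delivers a complete, rigorous proof of the stated theorem, whereas the paper's shorter argument establishes the key invariance only on $q$-eigenvectors and relies on the reader to finish the reduction.
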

\begin{proof}
Since $U$ is unitary, by Theorem 4.8 in \cite{Ghi1},  $\sigma_S(U)\subset \{q\in \mathbb{H}, \vert q\vert=1\}$.  If $q\in \sigma_{pS}(U)$, then $\vert q\vert=1$ and by Proposition 4.5 in \cite{Ghi1}, $q$ is a right eigenvalue of $U$. By Definition \ref{spectrum}. (i), $\ker  \Delta_q(U)\neq \{0\}$ thus there exists at least a non-zero $ u\in \ker  \Delta_q(U)$, so that $Uu=uq$ and $U^*u=uq$, as $\Delta_q(U)=\Delta_q(U^*)$,  by Proposition 1 (iii) in \cite{Facompact}. Since $T$ is \( p \)-hyponormal  $0\leq (T^*T)^p-(TT^*)^p$, therefore by Corollary \ref{T^*}. (ii),  right linearity of $T$, and the properties of the  inner product on $\mathsf{H}$, we have
\begin{align*}
& 0 \leq \langle (T^*T)^p-(TT^*)^pu,u\rangle = \langle (\vert T\vert ^{2p}- \vert T^*\vert ^{2p}) u, u\rangle \\
&= \langle \vert T\vert ^{2p}u, u\rangle- \langle U \vert T\vert ^{2p} U^* u, u\rangle 
= \langle \vert T\vert ^{2p}u, u\rangle- \langle \vert T\vert ^{2p} U^* u, U^*u\rangle \\
&= \langle \vert T\vert ^{2p}u, u\rangle- \langle \vert T\vert ^{2p} uq, uq\rangle 
= \langle \vert T\vert ^{2p}u, u\rangle- \langle \vert T\vert ^{2p} u, u\rangle \overline{q}q\\
&=0.
\end{align*}
This implies that $(T^*T)^pu-(TT^*)^pu=0$ or $\vert T\vert ^{2p}u=U \vert T\vert ^{2p} uq$, multiplying from right the both sides of the latter equality by $\overline{q}$,  we have $\vert T\vert ^{2p}u\overline{q}=U \vert T\vert ^{2p} u$ that means $\vert T\vert ^{2p} u\in \ker \Delta_{\overline{q}}(U)=\ker \Delta_q(U)$.
\end{proof}
\noindent As mentioned in  Remark 4.6 in \cite{Ghi1}, the subspace $\ker \Delta_q(T)$ has the role of an eigenspace, so the following corollary is concluded.
\begin{Corollary}
Let $T\in \mathfrak{B}(\mathsf{H})$ be \( p \)-hyponormal for $0<p<\frac{1}{2}$. If $T=U\vert T\vert$ is the polar decomposition of $T$ and $U$ is unitary, then the eigenspaces of $U$ reduce $T$.
\end{Corollary}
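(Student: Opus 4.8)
The plan is to obtain this statement directly from the preceding Theorem, the only extra ingredient being the correct quaternionic reading of the phrase ``eigenspaces of $U$''. First I would recall, following Remark~4.6 in~\cite{Ghi1}, that in a quaternionic Hilbert space the right eigenvalues of an operator occur in whole conjugacy classes, so that the natural analogue of a classical eigenspace attached to a right eigenvalue $q$ is not a subspace of the form $\ker(U-qI)$ but the kernel of the associated operator $\Delta_q(U)=U^2-U(q+\bar q)+I|q|^2$. Thus the eigenspaces of $U$ are precisely the subspaces $\ker\Delta_q(U)$, and by Proposition~4.5 in~\cite{Ghi1} these are nontrivial exactly when $q\in\sigma_{pS}(U)$.

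Next I would simply invoke the preceding Theorem. Its hypotheses are exactly those of the Corollary: $T\in\mathfrak{B}(\mathsf{H})$ is $p$-hyponormal with $0<p<\frac12$, $T=U\vert T\vert$ is the polar decomposition, and $U$ is unitary (which, by Theorem~4.8 in~\cite{Ghi1}, confines $\sigma_S(U)$ to the unit sphere, as already used in that proof). The Theorem then asserts that each $\ker\Delta_q(U)$ with $q\in\sigma_{pS}(U)$ is a reducing subspace for $T$. Combining this with the identification of the previous paragraph shows that every eigenspace of $U$ reduces $T$, which is the claim.

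I do not expect any genuine obstacle here: all the analytic content is already carried out inside the preceding Theorem (the verification that $\vert T\vert^{2p}u\in\ker\Delta_q(U)$ whenever $u\in\ker\Delta_q(U)$, together with the $U$-invariance of these subspaces coming from $Uu=uq$, $U^{*}u=uq$), and the Corollary is essentially a reformulation. The only point deserving a sentence of care is making explicit that ``eigenspace of $U$'' is to be understood in the spherical sense of~\cite{Ghi1}; once that convention is fixed, the Corollary is a one-line consequence of the Theorem.
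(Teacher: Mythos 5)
Your proposal is correct and follows exactly the paper's own route: the paper derives the corollary from the preceding Theorem by invoking Remark~4.6 of \cite{Ghi1} to identify the eigenspaces of $U$ with the subspaces $\ker\Delta_q(U)$, $q\in\sigma_{pS}(U)$, which the Theorem has shown to reduce $T$. Your explicit remark that ``eigenspace'' must be read in this spherical sense is precisely the (only) content of the paper's one-line deduction.
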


%%%%%%%%%%%%%%%%%%%%%%%%%%%%%%%%%%%

\section{Generalized Cauchy-Schwarz Inequality}

%%%%%%%%%%%%%%%%%%%%%%%%%%%%%%%%
According to \cite{Choi}, every \( p \)-hyponormal operator on a complex Hilbert space is paranormal. Choi et al. in \cite{Choi} introduce a new family of operators that set between \( p \)-hyponormal and paranormal operators on a complex Hilbert space. In this section, we follow \cite{Choi} and investigate similar conclusions in the quaternionic setting. 
As before, $\mathsf{H}$ will stand for a right quaternionic Hilbert space, and by $\mathfrak{B}(\mathsf{H})$, we mean the space of bounded right linear quaternionic operators on $\mathsf{H}$. The following definition generalizes Definition 1.1 in \cite{Choi} to the quaternionic case.
\begin{Definition}
An operator $T\in \mathfrak{B}(\mathsf{H})$ is said to satisfy the generalized Cauchy-Schwarz inequality, and we write $T\in GCSI(\mathsf{H})$ if $T$ satisfies the following inequality:
\begin{equation}\label{GCSI}
|\langle Tx,y\rangle |\leq (\Vert Tx\Vert \Vert y\Vert)^\alpha  (\Vert Ty\Vert \Vert x\Vert)^\beta,
\end{equation}
for every $x, y\in \mathsf{H}$ and for some $\beta \in (0, 1]$ and $\alpha +\beta =1$. 
\end{Definition}
\noindent In particular, if $T=I$, the inequality (\ref{GCSI}) is reduced to the Cauchy-Schwarz inequality, which was proved in Proposition 2.2 in \cite{Ghi1}.
\begin{Proposition}\label{properties}
Let $T\in GCSI(\mathsf{H})$,  then the following statements hold
\begin{itemize}
\item[(i)] For any $r\in \mathbb{R}, rT=Tr\in GCSI(\mathsf{H})$.
\item[(ii)] If T is invertible, then $T^{-1}\in GCSI(\mathsf{H})$.
\item[(iii)] If $S$ is unitalily equivalent to $T$, then $S\in GCSI(\mathsf{H})$.
\item[(iv)] If $\mathcal{M}$ is a right-invariant subspace for $T$, then $PTP=TP \in GCSI(\mathsf{H})$ where $P$ is the orthogonal projection of $\mathsf{H}$ onto $\mathcal{M}$.
\end{itemize}
\end{Proposition}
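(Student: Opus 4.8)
The plan is to obtain all four assertions as direct consequences of the defining inequality (\ref{GCSI}) for $T$, by inserting suitably chosen vectors and invoking only elementary facts about the quaternionic inner product on $\mathsf{H}$: conjugate symmetry, right linearity in the second slot, the induced identity $\langle up,v\rangle=\bar p\,\langle u,v\rangle$, and the centrality of $\mathbb{R}$ in $\mathbb{H}$ together with the compatibility of the left and right multiplications (Proposition 3.1 of \cite{Ghi1}). The one structural remark I will use repeatedly is that the right-hand side of (\ref{GCSI}), namely $(\|Tx\|\|y\|)^{\alpha}(\|Ty\|\|x\|)^{\beta}$, is invariant under interchanging $x$ and $y$ (the swap exchanges the two factors), so any substitution that swaps the roles of the two arguments reproduces the same bound with the same pair $(\alpha,\beta)$.

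For (i) I would first check that $rT=Tr$ for real $r$, which holds because $ru=ur$ for every $u\in\mathsf{H}$ when $r\in\mathbb{R}$; then $\langle(rT)x,y\rangle=r\langle Tx,y\rangle$ and $\|(rT)x\|=|r|\,\|Tx\|$, so (\ref{GCSI}) for $T$ immediately gives (\ref{GCSI}) for $rT$ after cancelling the common factor $|r|=|r|^{\alpha+\beta}$ (the case $r=0$ being trivial). For (ii), assuming $T^{-1}\in\mathfrak{B}(\mathsf{H})$, I would set $a=T^{-1}x$, $b=T^{-1}y$ and apply (\ref{GCSI}) to the pair $(b,a)$: since $\langle T^{-1}x,y\rangle=\langle a,Tb\rangle=\overline{\langle Tb,a\rangle}$, $\|Ta\|=\|x\|$ and $\|Tb\|=\|y\|$, the resulting estimate is exactly $|\langle T^{-1}x,y\rangle|\le(\|T^{-1}x\|\|y\|)^{\alpha}(\|T^{-1}y\|\|x\|)^{\beta}$ by the symmetry noted above.

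For (iii) I would write $S=V^{*}TV$ with $V$ unitary, so that $\langle Sx,y\rangle=\langle T(Vx),Vy\rangle$, and then apply (\ref{GCSI}) to $(Vx,Vy)$, using $\|T(Vx)\|=\|V^{*}T(Vx)\|=\|Sx\|$ and $\|Vx\|=\|x\|$ (isometry of $V$ and of $V^{*}$) to read off the conclusion. For (iv), $T$-invariance of $\mathcal{M}$ forces $TPx\in\mathcal{M}$, hence $P(TPx)=TPx$, i.e. $PTP=TP$; then $\langle PTPx,y\rangle=\langle T(Px),Py\rangle$ by self-adjointness of $P$, and applying (\ref{GCSI}) to $(Px,Py)$ together with $\|T(Px)\|=\|PTPx\|$, $\|T(Py)\|=\|PTPy\|$ and the contractivity estimates $\|Px\|\le\|x\|$, $\|Py\|\le\|y\|$ (and monotonicity of $t\mapsto t^{\alpha}$ and $t\mapsto t^{\beta}$ on $[0,\infty)$) yields $|\langle PTPx,y\rangle|\le(\|PTPx\|\|y\|)^{\alpha}(\|PTPy\|\|x\|)^{\beta}$.

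I do not expect any real obstacle here; the only delicate points are bookkeeping ones specific to the quaternionic framework — confirming $rT=Tr$ so that (i) is well posed, tracking which slot of $\langle\cdot,\cdot\rangle$ carries the conjugation when moving scalars and operators across it, and, in (iv), recognizing that $P$ is a self-adjoint contraction and that $T$-invariance is precisely what collapses $PTP$ to $TP$. The lone conceptual ingredient is the $x\leftrightarrow y$ symmetry of the right-hand side of (\ref{GCSI}), which is exactly what lets the substitutions in (ii)--(iv) close up without changing the exponents.
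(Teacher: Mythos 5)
Your proposal is correct, and for parts (i) and (iii) it is essentially the argument in the paper (restrict the left scalar multiplication to real $r$, respectively substitute the unitarily transported pair $(Ux,Uy)$ into (\ref{GCSI})). For (ii) and (iv) you take a genuinely different, and in fact tighter, route. In (ii) the paper passes through the adjoint: it writes $y=T^*u$, uses $(T^{-1})^*=(T^*)^{-1}$ together with the pointwise identity $\Vert Tu\Vert=\Vert T^*u\Vert$, and ends up with the GCSI bound for $T^{-1}$ with the roles of $\alpha$ and $\beta$ interchanged; your substitution $a=T^{-1}x$, $b=T^{-1}y$ applied to the pair $(b,a)$, combined only with conjugate symmetry of the inner product, avoids adjoints altogether and returns the inequality with the original exponents $(\alpha,\beta)$ — note that $(\Vert Tb\Vert\,\Vert a\Vert)^\alpha(\Vert Ta\Vert\,\Vert b\Vert)^\beta=(\Vert T^{-1}x\Vert\,\Vert y\Vert)^\alpha(\Vert T^{-1}y\Vert\,\Vert x\Vert)^\beta$ directly, so the step closes without any extra identity. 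In (iv) the paper argues by decomposing $y$ into its components in $\mathcal{M}$ and $\mathcal{M}^\perp$ and using $Ty=TPy$ on $\mathcal{M}$, whereas you apply (\ref{GCSI}) once to $(Px,Py)$, using $P=P^*$, $PTP=TP$ and contractivity of $P$; this handles all $y$ simultaneously and yields exactly the inequality $|\langle (TP)x,y\rangle|\le(\Vert (TP)x\Vert\,\Vert y\Vert)^\alpha(\Vert (TP)y\Vert\,\Vert x\Vert)^\beta$ in the form required by the definition. One caveat: your preliminary remark that the right-hand side of (\ref{GCSI}) is invariant under interchanging $x$ and $y$ is false unless $\alpha=\beta$ (the swap exchanges the two factors but not the exponents attached to them); fortunately none of your four arguments actually needs this claimed symmetry — in each case the substitution already produces the factors in the correct positions — so delete that remark rather than lean on it.
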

\begin{proof}
\begin{itemize}
\item[(i)] Let $q\in \mathbb{H}$. According to Section 3.4 in \cite{Ghi1},  $qT$ is a bounded right linear operator which is defined by $(qT)x:=q(Tx)$ for $x\in \mathsf{H}$.
For a Hilbert basis $\mathsf{N}$ of $\mathsf{H}$ by (\ref{left multiplication}) 
$$q(Tx)= \sum_{z\in \mathsf{N}}zq\langle z,Tx\rangle.$$
Therefore,
\begin{align*}
\vert \langle (qT)x,y\rangle \vert &= \vert \langle q(Tx),y\rangle \vert=\vert \langle \sum_{z\in \mathsf{N}}zq\langle z,Tx\rangle ,y\rangle \vert= \vert\sum_{z\in \mathsf{N}} \langle zq\langle z,Tx\rangle ,y\rangle \vert \\
&= \vert\sum_{z\in \mathsf{N}}  \overline{q\langle z,Tx\rangle }\langle z,y \rangle\vert= \vert \sum_{z\in \mathsf{N}} \overline{\langle z,Tx\rangle } \overline{q}\langle z, y \rangle \vert 
=  \vert \sum_{z\in \mathsf{N}} \langle Tx, z\rangle  \overline{q}\langle z, y \rangle \vert  \\
&= |r| |\langle\sum_{z\in \mathsf{N}}  z \langle z,Tx\rangle ,y\rangle | \tag*{$q=r\in \mathbb{R}$}\\
&= |r| |\langle Tx,y\rangle | \leq  |r|  (\Vert Tx\Vert \Vert y\Vert)^\alpha  (\Vert Ty\Vert \Vert x\Vert)^\beta \\
&= (\Vert r (Tx)\Vert \Vert y\Vert)^\alpha  (\Vert r(Ty)\Vert \Vert x\Vert)^\beta \\
&= (\Vert (rT)x\Vert \Vert y\Vert)^\alpha  (\Vert (rT)y\Vert \Vert x\Vert)^\beta .
\end{align*}
\item[(ii)] Let $T\in GCSI(\mathsf{H})$ be an invertible operator and $x, y\in \mathsf{H}$ be arbitrary. By Remark 2.16 in \cite{Ghi1}, $T^*$ is bijective and  $(T^*)^{-1}=(T^{-1})^*\in \mathfrak{B}(\mathsf{H})$. Also, by Proposition 2.19 in \cite{Ghi1}, $\Vert Tu\Vert =\Vert T^*u\Vert$ for all $u\in\mathsf{H}$. Now, let $y=T^*u$ for some  $u\in\mathsf{H}$
\begin{align*}
|\langle T^{-1}x,y\rangle|&= |\langle x,(T^{-1})^*y\rangle|=|\langle x,(T^{-1})^*T^*u\rangle|=|\langle x,(T^*)^{-1}T^*u\rangle|=|\langle TT^{-1}x,u\rangle|\\
&\leq (\Vert TT^{-1}x\Vert \Vert u\Vert)^{\alpha}(\Vert T^{-1}x\Vert \Vert Tu\Vert)^{\beta}\\
&= (\Vert x\Vert \Vert (T^*)^{-1}y\Vert)^{\alpha}(\Vert T^{-1}x\Vert \Vert T^*u\Vert)^{\beta}
=(\Vert x\Vert \Vert (T^*)^{-1}y\Vert)^{\alpha}(\Vert T^{-1}x\Vert \Vert y\Vert)^{\beta}\\
&=(\Vert x\Vert \Vert (T^{-1})^*y\Vert)^{\alpha}(\Vert T^{-1}x\Vert \Vert y\Vert)^{\beta}= (\Vert x\Vert \Vert T^{-1}y\Vert)^{\alpha}(\Vert T^{-1}x\Vert \Vert y\Vert)^{\beta}.
\end{align*}
\item[(iii)] Let $S$ be unitalily equivalent to $T$ and $U$ be the unitary operator so that $S=U^*TU$. Thus, by Definition 2.25 in \cite{Colombo}, $U^*=U^{-1}$, therefore $\Vert U \Vert=1$. For $x, y\in\mathsf{H}$ 
\begin{align*}
|\langle Sx,y\rangle|&= |\langle U^*TUx,y\rangle|= |\langle TUx,Uy\rangle|\\
&\leq   (\Vert TUx\Vert \Vert Uy\Vert)^{\alpha}(\Vert Ux\Vert \Vert TUy\Vert)^{\beta}\\
&= (\Vert  UU^* TUx\Vert \Vert Uy\Vert)^{\alpha}(\Vert Ux\Vert \Vert UU^* TUy\Vert)^{\beta}\\
&=  (\Vert  U^* TUx\Vert \Vert Uy\Vert)^{\alpha}(\Vert Ux\Vert \Vert U^* TUy\Vert)^{\beta}\\
&=  (\Vert  Sx\Vert \Vert y\Vert)^{\alpha}(\Vert x\Vert \Vert Sy\Vert)^{\beta}.
\end{align*}
\item[(v)] Let $\mathcal{M}$ be a  right-invariant subspace for $T$ which means $\mathcal{M}$ is a closed right-subspace of $\mathsf{H}$ and $T\mathcal{M}\subset \mathcal{M}$, and let $P$ be the orthogonal projection of $\mathsf{H}$ onto $\mathcal{M}$. Therefore, $Px$ is the unique point in $\mathcal{M}$ such that $x-Px\perp \mathcal{M}$ for any $x\in\mathsf{H}$. Since $\mathcal{M}=\{Px, x\in\mathsf{H}\}$, it is clear that $T\mathcal{M} \subset \mathcal{M}$ if and only is $TPx=PTPx$. In \cite{Fa}, Fashandi et al. showed that $P$ is a right linear operator which is an idempotent, $\Vert Px\Vert \leq \Vert x\Vert$ for all $x\in \mathsf{H}$,  $\ker P=\mathcal{M}^{\perp}$, and $ran P= \mathcal{M}$. Now, we have
\begin{eqnarray*}
 |\langle TPx, y\rangle|\leq  (\Vert  TPx\Vert  \Vert  y\Vert)^{\alpha}(\Vert Px\Vert \Vert Ty\Vert)^{\beta}.
\end{eqnarray*}
For all $x\in\mathsf{H}$, $\Vert Px\Vert \leq \Vert x\Vert$, thus $\Vert Px\Vert^{\beta} \leq \Vert x\Vert^\beta$. Also, if $y\in \mathcal{M}$ then for some $y_1\in \mathsf{H}, Ty=TPy_1=TP^2y_1=TPy$, hence
\begin{eqnarray*}
 |\langle TPx, y\rangle |\leq  (\Vert  TPx\Vert  \Vert  y\Vert)^{\alpha}(\Vert x\Vert \Vert TPy\Vert)^{\beta},
\end{eqnarray*}
and if  $y\in \mathcal{M}^\perp$, then $\langle TPx| y\rangle=0$. Thus, we have the following inequality 
\begin{eqnarray*}
 |\langle TPx, y\rangle |\leq  (\Vert  TPx\Vert  \Vert  y\Vert)^{\alpha}(\Vert x\Vert \Vert Ty\Vert)^{\beta}.
\end{eqnarray*}
\end{itemize}
\end{proof}
\begin{Remark}
In Proposition \ref{properties} (i),  this question remains what the result is,  if $q\in \mathbb{H}-\mathbb{R}$.
\end{Remark}
\noindent The next theorem is the quaternionic version of Theorem 2.3 in \cite{Choi}. 
\begin{Theorem}
If $T \in  \mathfrak{B}(\mathsf{H})$ is \( p \)-hyponormal, then $T\in GCSI(\mathsf{H})$.
\end{Theorem}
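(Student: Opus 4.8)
The plan is to follow the complex argument of Theorem~2.3 in \cite{Choi}, extracting the exponent pair from the polar decomposition $T=U\vert T\vert$ and the continuous functional calculus, with the quaternionic Hölder--McCarthy and Löwner--Heinz inequalities (Theorem~\ref{H-M-L}) together with Corollary~\ref{T^*} doing the analytic work. Fix any $t$ with $0<t\le p$ and $t<1$ --- this is possible because $0<p\le 1$ (for instance $t=p/2$) --- and put $\beta=t$, $\alpha=1-t$, so that $\beta\in(0,1]$ and $\alpha+\beta=1$, as the definition of $GCSI(\mathsf{H})$ requires. First I would rewrite the sesquilinear form, using $T=U\vert T\vert$, the factorization $\vert T\vert=\vert T\vert^{t}\vert T\vert^{1-t}$, and the selfadjointness of $\vert T\vert^{t}$: for $x,y\in\mathsf{H}$,
\begin{equation*}
\langle Tx,y\rangle=\langle U\vert T\vert x,y\rangle=\langle \vert T\vert x,U^{*}y\rangle=\big\langle \vert T\vert^{1-t}x,\ \vert T\vert^{t}U^{*}y\big\rangle ,
\end{equation*}
whence the quaternionic Cauchy--Schwarz inequality (Proposition~2.2 in \cite{Ghi1}) gives $\vert\langle Tx,y\rangle\vert\le \Vert \vert T\vert^{1-t}x\Vert\,\Vert \vert T\vert^{t}U^{*}y\Vert$.

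Next I would estimate the two factors. For the first, since $T^{*}T\ge 0$ and $0<1-t<1$, Theorem~\ref{H-M-L}(i)(b) yields
$\Vert\vert T\vert^{1-t}x\Vert^{2}=\langle(T^{*}T)^{1-t}x,x\rangle\le\langle T^{*}Tx,x\rangle^{1-t}\Vert x\Vert^{2t}=\Vert Tx\Vert^{2(1-t)}\Vert x\Vert^{2t}$. For the second factor I would first transfer to $T^{*}$: using $\vert T^{*}\vert^{2t}=U\vert T\vert^{2t}U^{*}$ from Corollary~\ref{T^*}(i) and the adjoint relation $\langle Uv,w\rangle=\langle v,U^{*}w\rangle$,
\begin{equation*}
\Vert\vert T\vert^{t}U^{*}y\Vert^{2}=\langle \vert T\vert^{2t}U^{*}y,U^{*}y\rangle=\langle U\vert T\vert^{2t}U^{*}y,y\rangle=\langle \vert T^{*}\vert^{2t}y,y\rangle .
\end{equation*}
Here is where the hypothesis is used, and it is the only nontrivial step: from $p$-hyponormality, $(T^{*}T)^{p}\ge(TT^{*})^{p}$, i.e.\ $\vert T\vert^{2p}\ge\vert T^{*}\vert^{2p}$, so the Löwner--Heinz inequality Theorem~\ref{H-M-L}(ii) applied with exponent $t/p\in(0,1]$ gives $\vert T\vert^{2t}\ge\vert T^{*}\vert^{2t}$, hence $\langle\vert T^{*}\vert^{2t}y,y\rangle\le\langle\vert T\vert^{2t}y,y\rangle$; then Theorem~\ref{H-M-L}(i)(b) once more (legitimate since $0<t<1$) bounds this last quantity by $\Vert Ty\Vert^{2t}\Vert y\Vert^{2(1-t)}$.

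Multiplying the two estimates and taking square roots then gives $\vert\langle Tx,y\rangle\vert\le(\Vert Tx\Vert\Vert y\Vert)^{1-t}(\Vert Ty\Vert\Vert x\Vert)^{t}$, which is exactly (\ref{GCSI}) with $\alpha=1-t$ and $\beta=t$; so $T\in GCSI(\mathsf{H})$. I expect the main obstacle to be not any delicate estimate but the bookkeeping of admissible exponents: the inversion $\vert T^{*}\vert^{2t}\le\vert T\vert^{2t}$ forces $t\le p$, while both applications of Hölder--McCarthy force $0<t<1$, and these constraints are simultaneously satisfiable precisely because $p\le 1$ --- this is the point at which the argument would break down under a weaker hypothesis. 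One should also make sure the functional-calculus identities $\vert T\vert^{2s}=(T^{*}T)^{s}$ and the selfadjointness of the fractional powers are used consistently, but these are routine consequences of Theorem~\ref{remark} and Corollary~\ref{T^*}.
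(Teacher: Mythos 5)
Your proof is correct and follows essentially the same route as the paper: write $\langle Tx,y\rangle=\langle \vert T\vert^{1-t}x,\vert T\vert^{t}U^{*}y\rangle$ from the polar decomposition, apply the quaternionic Cauchy--Schwarz inequality, convert $U\vert T\vert^{2t}U^{*}$ into $\vert T^{*}\vert^{2t}$ via Corollary \ref{T^*} (i), use the $p$-hyponormality inequality, and finish with H\"older--McCarthy. The only difference is minor: the paper takes $t=p$ directly (so $\beta=p$ and no L\"owner--Heinz step is needed, since $p$-hyponormality already gives $\vert T^{*}\vert^{2p}\leq \vert T\vert^{2p}$), whereas you descend to an auxiliary exponent $t\leq p$, $t<1$ by L\"owner--Heinz, which costs nothing for membership in $GCSI(\mathsf{H})$ and has the small advantage of keeping both H\"older--McCarthy applications strictly inside the range $0<r<1$ even when $p=1$.
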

\begin{proof}
Let $T=U|T|$ be the polar decomposition of $T$. Since $T$ is \( p \)-hyponormal, obviousely, $|T^*|^{2p}\leq |T|^{2p}$ for $0<p\leq 1$. Let $x, y\in\mathsf{H}$ and $q=1-p$, then we have
\begin{align*}
|\langle Tx, y\rangle|^2 &= |\langle U|T|x, y\rangle|^2=|\langle |T|x, U^*y\rangle|^2 = |\langle |T|^{p+q}x, U^*y\rangle|^2 =|\langle |T|^{q}x, |T|^{p}U^*y\rangle|^2\\
&\leq  \Vert |T|^{q}x\Vert^2\Vert |T|^{p}U^*y\Vert^2 = \Vert |T|^{q}x\Vert ^2\Vert U|T|^{p}U^*y\Vert^2 \\
&\leq  \langle |T|^{q}x, |T|^{q}x\rangle  \langle U|T|^{p}U^*y, U|T|^{p}U^*y\rangle \\
&= \langle |T|^{q}x, |T|^{q}x\rangle  \langle |T^*|^{p}y, |T^*|^{p}y\rangle  \tag*{by Corollary \ref{T^*}. (i)}\\
&= \langle |T|^{2q}x, x\rangle  \langle |T^*|^{2p}y, y\rangle\\
&\leq \langle |T|^{2q}x, x\rangle  \langle |T|^{2p}y, y\rangle\\
&\leq  \langle |T|^{2}x, x\rangle ^q \Vert x\Vert ^{2(1-q)}  \langle |T|^{2}y, y\rangle^p \Vert y\Vert ^{2(1-p)} \tag*{by Theorem \ref{H-M-L}. (b)} \\
&= \langle |T|x, |T|x\rangle ^q \Vert x\Vert ^{2(1-q)}  \langle |T|y, |T|y\rangle^p \Vert y\Vert ^{2(1-p)}\\
&= \Vert |T|x\Vert ^{2q}  \Vert x\Vert ^{2(1-q)} \Vert |T|y\Vert ^{2p} \Vert y\Vert ^{2(1-p)}\\
&= \biggl ( \Vert Tx\Vert \Vert y\Vert)^q ( \Vert Ty\Vert \Vert x\Vert)^p \bigg)^2.
\end{align*}
\end{proof}
\noindent Theorem \ref{reducing} is an extended version of Theorem 2.6 in \cite{Choi}  to the quaternionic case. Since the proof is similar to the complex case, we will omit it for brevity.
\begin{Theorem}\label{reducing}
If $T\in GCSI(\mathsf{H})$, then $\ker T\subset \ker T^*$ and $\ker T= \ker T^2$. Therefore, $\ker T$ is a reducing subspace for $T$. 
\end{Theorem}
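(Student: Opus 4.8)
The plan is to extract both kernel identities by substituting carefully chosen pairs of vectors into the defining inequality (\ref{GCSI}), and then to read off the reducing property from the resulting invariance of $\ker T$ under $T$ and $T^{*}$. The only features of (\ref{GCSI}) I would invoke are $\beta\in(0,1]$ (so that $0^{\beta}=0$) and the basic algebraic properties of the quaternionic inner product recalled in Section~2; in particular, every genuine estimate needed is already packaged inside (\ref{GCSI}) itself.

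First I would prove $\ker T\subset\ker T^{*}$. Fix $x\in\ker T$. Applying (\ref{GCSI}) to the pair $(y,x)$ in place of $(x,y)$ gives, for every $y\in\mathsf{H}$,
\[
|\langle Ty,x\rangle|\leq(\Vert Ty\Vert\,\Vert x\Vert)^{\alpha}(\Vert Tx\Vert\,\Vert y\Vert)^{\beta}=0,
\]
since $Tx=0$ and $\beta>0$ (and $(\,\cdot\,)^{\alpha}$ contributes a bounded factor even when $\alpha=0$). Hence $\langle T^{*}x,y\rangle=\overline{\langle y,T^{*}x\rangle}=\overline{\langle Ty,x\rangle}=0$ for every $y$; taking $y=T^{*}x$ yields $\Vert T^{*}x\Vert^{2}=0$, so $x\in\ker T^{*}$.

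Next I would prove $\ker T=\ker T^{2}$. The inclusion $\ker T\subseteq\ker T^{2}$ is immediate. Conversely, if $T^{2}x=0$, apply (\ref{GCSI}) with $y:=Tx$:
\[
\Vert Tx\Vert^{2}=|\langle Tx,Tx\rangle|\leq(\Vert Tx\Vert\,\Vert Tx\Vert)^{\alpha}(\Vert T(Tx)\Vert\,\Vert x\Vert)^{\beta}=(\Vert Tx\Vert^{2})^{\alpha}\cdot 0=0,
\]
again because $T^{2}x=0$ and $\beta>0$; thus $Tx=0$ and $x\in\ker T$, so the two kernels coincide.

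Finally, $\ker T$ is a closed right-linear subspace of $\mathsf{H}$ (it is the kernel of a bounded right-linear operator and is stable under right scalar multiplication), it is trivially invariant under $T$, and by the first step it is contained in $\ker T^{*}$, hence also invariant under $T^{*}$; invariance of $\ker T$ under both $T$ and $T^{*}$ is precisely the condition that $\ker T$ reduce $T$ (equivalently: for $x\in\ker T$ and $y\in(\ker T)^{\perp}$ one has $\langle Ty,x\rangle=\langle y,T^{*}x\rangle=0$, so $(\ker T)^{\perp}$ is $T$-invariant). I do not expect a real obstacle here; the only point requiring attention is bookkeeping with the non-commutativity of $\mathbb{H}$ when passing between $\langle Ty,x\rangle$, $\langle x,Ty\rangle$, and $\langle T^{*}x,y\rangle$.
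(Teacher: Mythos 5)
Your proof is correct; the paper itself omits the argument for Theorem \ref{reducing}, deferring to the complex case of Choi et al., and your substitution argument (plugging the pairs $(y,x)$ and $(x,Tx)$ into (\ref{GCSI}) and using $\beta>0$) is exactly that proof transported to the quaternionic setting, with the identities $\overline{\langle u,v\rangle}=\langle v,u\rangle$ and $\langle T^{*}u,v\rangle=\langle u,Tv\rangle$ handled correctly. The closing step is also fine: invariance of the closed right subspace $\ker T$ under both $T$ and $T^{*}$ is precisely the reducing condition, and the non-commutativity of $\mathbb{H}$ causes no trouble in the passages you flag.
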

\noindent In \cite{Choi} it is proved that all bounded operators on a complex Hilbert space satisfying the generalized Cauchy-Schwarz inequality ae paranormal. We aim to prove the same fact for the quaternionic bounded operators. To do this, we need to prove the following theorem: the extension of Theorem 2.12 in \cite{Choi}.
\begin{Theorem}\label{Polar}
Let $T=U\vert T\vert$ be the polar decomposition of $T$ in $\mathfrak{B}(\mathsf{H})$. If $T\in GCSI(\mathsf{H})$, then for every $x\in\mathsf{H}$,
\begin{equation}\label{TU^*x}
\Vert TU^*x\Vert ^2\leq \Vert T^2U^*x\Vert \Vert U^*x\Vert.
\end{equation}
\end{Theorem}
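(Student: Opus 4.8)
The plan is to obtain \eqref{TU^*x} directly from the defining inequality \eqref{GCSI}, applied to one well-chosen pair of vectors, followed by elementary exponent bookkeeping. Fix $x\in\mathsf{H}$ and put $w=U^*x$. First I would apply \eqref{GCSI} with $w$ in the role of the first variable and $Tw$ in the role of the second variable. Since $\langle Tw,Tw\rangle=\Vert Tw\Vert^2$ is a nonnegative real number, its quaternionic modulus equals $\Vert Tw\Vert^2$, so the left-hand side of \eqref{GCSI} collapses to $\Vert Tw\Vert^2$; on the right-hand side one has $\Vert T\,(Tw)\Vert=\Vert T^2w\Vert$ and $\Vert Tw\Vert\,\Vert Tw\Vert=\Vert Tw\Vert^2$. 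Thus
\[
\Vert Tw\Vert^2\ \le\ \bigl(\Vert Tw\Vert\,\Vert Tw\Vert\bigr)^{\alpha}\bigl(\Vert T^2w\Vert\,\Vert w\Vert\bigr)^{\beta}\ =\ \Vert Tw\Vert^{2\alpha}\,\bigl(\Vert T^2w\Vert\,\Vert w\Vert\bigr)^{\beta}.
\]

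The second step is to clear the exponent $2\alpha$. If $\Vert Tw\Vert=0$ then also $T^2w=0$ and \eqref{TU^*x} holds trivially, so assume $\Vert Tw\Vert>0$ and divide the displayed inequality by $\Vert Tw\Vert^{2\alpha}$. Using $\alpha+\beta=1$ one gets $\Vert Tw\Vert^{2\beta}\le(\Vert T^2w\Vert\,\Vert w\Vert)^{\beta}$, and since $\beta\in(0,1]$ both sides are nonnegative, so raising them to the power $1/\beta$ yields $\Vert Tw\Vert^2\le\Vert T^2w\Vert\,\Vert w\Vert$. Recalling $w=U^*x$, this is exactly \eqref{TU^*x}.

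There is no real obstacle here: the only points needing a word of care are that in a right quaternionic Hilbert space $|\langle u,u\rangle|=\Vert u\Vert^2$ (so the substitution genuinely produces $\Vert TU^*x\Vert^2$ on the left), the degenerate case $TU^*x=0$, and the legitimacy of taking the $1/\beta$-th power, which is fine because $\beta>0$. It is perhaps worth noting that the polar decomposition is not actually used in this estimate — the same computation with an arbitrary $v\in\mathsf{H}$ in place of $U^*x$ shows that every $T\in GCSI(\mathsf{H})$ satisfies $\Vert Tv\Vert^2\le\Vert T^2v\Vert\,\Vert v\Vert$. Phrasing the conclusion through the vectors $U^*x$ follows the route of \cite{Choi}: together with Theorem \ref{reducing} (which gives $\ker T\subseteq\ker T^*$ and $\ker T$ reducing, so that the vectors $U^*x$ exhaust $\ker(T)^{\perp}$) it is tailored to deliver the paranormality of $T$ on all of $\mathsf{H}$ in the next result. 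If one prefers to keep the polar decomposition visible, one may instead first rewrite $TU^*x=U\vert T\vert U^*x=\vert T^*\vert x$ and $T^2U^*x=U\vert T\vert\,\vert T^*\vert x$ by Corollary \ref{T^*}.(i), but this detour is unnecessary.
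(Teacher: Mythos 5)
Your argument is correct as it stands, but it is a genuinely different (and much shorter) route than the paper's. The paper follows the complex-case proof of Choi et al.: it splits into $x\in\ker T^*$ and $x\in(\ker T^*)^{\perp}$, applies \eqref{GCSI} to the pair $(U^*x,\vert T^*\vert x)$, invokes Corollary \ref{T^*}.(i) to rewrite $\vert T^*\vert=U\vert T\vert U^*$, and only after several cancellations (and the identity $\Vert\,\vert T\vert u\Vert=\Vert Tu\Vert$) arrives at \eqref{TU^*x}. You instead substitute $y=Tw$ with $w=U^*x$ directly into \eqref{GCSI}, use $\vert\langle Tw,Tw\rangle\vert=\Vert Tw\Vert^{2}$, and clear the exponent $2\alpha$ (harmless even when $\alpha=0$, and the case $Tw=0$ is trivial); the polar decomposition never enters. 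Your observation that the same computation with an arbitrary $v$ in place of $U^*x$ gives $\Vert Tv\Vert^{2}\le\Vert T^{2}v\Vert\,\Vert v\Vert$ is also right, so under the paper's definition your argument actually yields paranormality of every $T\in GCSI(\mathsf{H})$ outright, making the final corollary immediate without Theorem \ref{reducing}. The reason the two proofs diverge is worth noting: the shortcut is available only because this paper states \eqref{GCSI} in terms of the norms $\Vert Tx\Vert$ and $\Vert Ty\Vert$, whereas the original definition of Choi et al.\ uses $\langle\vert T\vert^{2\alpha}x,x\rangle\,\langle\vert T^*\vert^{2\beta}y,y\rangle$, for which the substitution $y=Tx$ gives nothing directly and the detour through $U^*x$ and $\vert T^*\vert x$ is genuinely needed. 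So your proof is valid for the statement as written here, but it would not transfer to the operator-form definition, which is presumably why the paper keeps the longer argument.
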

\begin{proof}
Let $T=U\vert T\vert $ be the polar decomposition of $T$, since $\vert T\vert$ is self adjoint, $T^*=\vert T\vert U^*$ and so $\ker T^*=\ker U^*$. Hence if $x\in  \ker T^*$ the inequality (\ref{TU^*x}) is obviously valid. Let $x\in  (\ker T^*)^\perp$,  by formula (2.14) in \cite{Ghi1} that says  $\Vert \vert T\vert u\Vert= \Vert Tu\Vert$, for all $u\in \mathsf{H}$, we infer
\begin{align*}
0 &\leq  \vert \langle  \vert T^* \vert x, \vert T^* \vert x\rangle\vert ^2 =\vert \langle  U\vert T \vert U^* x, \vert T^* \vert x\rangle\vert^2 \tag*{by Corollary \ref{T^*}. (i)} \\
&= \vert \langle  T U^* x, \vert T^* \vert x\rangle\vert^2 \\
&\leq (\Vert T U^* x\Vert \Vert \vert T^* \vert x\Vert )^{2\alpha} (\Vert T\vert T^* \vert x \Vert \Vert  U^* x\Vert)^{2\beta} \tag*{by (\ref{GCSI})}\\
&=  (\langle T U^* x, T U^* x\rangle \Vert \vert T^* \vert x\Vert^2)^{\alpha} (\langle T\vert T^* \vert x, T\vert T^* \vert x \rangle \Vert U^* x\Vert^2)^{\beta}\\
&=  (\langle UT^*T U^* x,  x\rangle \Vert \vert T^* \vert x\Vert^2)^{\alpha} (\langle \vert T^* \vert T^*T\vert T^* \vert x, x \rangle \Vert U^* x\Vert^2)^{\beta}\\
&=  (\langle U \vert T\vert ^2 U^* x,  x\rangle \Vert \vert T^* \vert x\Vert^2)^{\alpha} (\langle \vert T^* \vert \vert \vert T\vert ^2\vert T^* \vert x, x \rangle \Vert U^* x\Vert^2)^{\beta}\\
&=  (\langle \vert T^*\vert ^2x,  x\rangle \Vert \vert T^* \vert x\Vert^2)^{\alpha} (\Vert \vert T\vert  \vert T^*\vert x \Vert \Vert U^* x\Vert)^{2\beta}\\
&=  (\langle \vert T^*\vert x,  \vert T^*\vert x\rangle \Vert \vert T^* \vert x\Vert^2)^{\alpha} (\Vert \vert T\vert  \vert T^*\vert x \Vert \Vert U^* x\Vert)^{2\beta}.
\end{align*}
The latter inequality implies that
\begin{equation*}
\vert \langle  \vert T^* \vert x, \vert T^* \vert x\rangle\vert ^{2-\alpha}\leq \Vert \vert T^* \vert x\Vert ^{2{\alpha}}(\Vert \vert T\vert  \vert T^*\vert x \Vert \Vert U^* x\Vert)^{2\beta}.
\end{equation*}
Thus,
\begin{equation*} 
(\Vert \vert T^* \vert x\Vert ^2)^{2\beta}\leq (\Vert \vert T\vert  \vert T^*\vert x \Vert \Vert U^* x\Vert)^{2\beta}.
\end{equation*}
Again by  Corollary \ref{T^*}. (i) and the equalities  $T=U\vert T\vert $ and $\Vert Uu\Vert= \Vert u\Vert $ we obtain,
\begin{equation*}
\Vert U\vert T\vert  U^* x\Vert^2 \leq \Vert \vert T\vert   U\vert T\vert  U^* x \Vert \Vert U^* x\Vert. 
\end{equation*}
Consequently 
\begin{equation*}
\Vert T U^* x\Vert^2 \leq \Vert U \vert T\vert   T  U^* x \Vert \Vert U^* x\Vert= \Vert  T^2  U^* x \Vert \Vert U^* x\Vert.
\end{equation*}
\end{proof}
\noindent As a result of Theorems \ref{reducing} and \ref{Polar}, we obtain the following corollary. The proof is similar to the proof in complex case, which is referable in Corollary 2.13 in \cite{Choi}.
\begin{Corollary}
If  $T\in GCSI(\mathsf{H})$, then $T$ is paranormal.
\end{Corollary}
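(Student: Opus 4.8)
The plan is to deduce this corollary from the two immediately preceding results, Theorem~\ref{reducing} and Theorem~\ref{Polar}, mimicking the complex argument of Corollary~2.13 in \cite{Choi}. Recall that $T$ is paranormal precisely when $\Vert Tx\Vert^2\le\Vert T^2x\Vert\,\Vert x\Vert$ for all $x\in\mathsf{H}$, so the task is to verify this inequality for an arbitrary $x\in\mathsf{H}$.

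First I would reduce to the case $x\in(\ker T)^\perp$. The kernel $\ker T$ is closed, so $x$ decomposes orthogonally as $x=x_0+x_1$ with $x_0\in\ker T$ and $x_1\in(\ker T)^\perp$; by Theorem~\ref{reducing} we have $\ker T=\ker T^2$, hence $Tx=Tx_1$, $T^2x=T^2x_1$, and of course $\Vert x_1\Vert\le\Vert x\Vert$. Thus it suffices to establish $\Vert Tx_1\Vert^2\le\Vert T^2x_1\Vert\,\Vert x_1\Vert$ for vectors in $(\ker T)^\perp$, since then
\begin{equation*}
\Vert Tx\Vert^2=\Vert Tx_1\Vert^2\le\Vert T^2x_1\Vert\,\Vert x_1\Vert=\Vert T^2x\Vert\,\Vert x_1\Vert\le\Vert T^2x\Vert\,\Vert x\Vert .
\end{equation*}

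Next, with the polar decomposition $T=U\vert T\vert$ in hand, I would invoke the quaternionic polar decomposition data recorded in Corollary~\ref{T^*}: since $U^*U=I$ on $(\ker\vert T\vert)^\perp=(\ker T)^\perp$, every $x_1\in(\ker T)^\perp$ satisfies $U^*(Ux_1)=x_1$. Applying Theorem~\ref{Polar} to the vector $Ux_1\in\mathsf{H}$ then yields $\Vert T\,U^*(Ux_1)\Vert^2\le\Vert T^2\,U^*(Ux_1)\Vert\,\Vert U^*(Ux_1)\Vert$, that is, $\Vert Tx_1\Vert^2\le\Vert T^2x_1\Vert\,\Vert x_1\Vert$, which is exactly what the reduction above requires. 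Chaining the two steps finishes the proof.

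The remaining points are routine: the orthogonal splitting exists because $\ker T$ is closed, $Tx_0=0=T^2x_0$ follows from $\ker T=\ker T^2$, and $U^*Ux_1=x_1$ on $(\ker T)^\perp$ is read off from Corollary~\ref{T^*}. I expect the only care needed is in keeping straight the relations among $\ker T$, $\ker T^*$, $\ker\vert T\vert$ and the initial and final spaces of $U$ in the quaternionic setting; but since Corollary~\ref{T^*} already supplies $\vert T^*\vert^t=U\vert T\vert^tU^*$ together with the polar decomposition $T^*=U^*\vert T^*\vert$, no genuinely new operator-theoretic ingredient is required, and the argument is essentially a line-by-line transcription of the complex proof.
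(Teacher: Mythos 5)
Your argument is correct and is essentially the proof the paper intends: the paper omits it with a pointer to Corollary 2.13 of Choi et al., and your reduction to $(\ker T)^\perp$ followed by applying Theorem~\ref{Polar} to the vector $Ux_1$, using $U^*U=I$ on $(\ker \vert T\vert)^\perp=(\ker T)^\perp$ as recorded in Corollary~\ref{T^*}, is precisely that transcription to the quaternionic setting. One cosmetic remark: the appeal to $\ker T=\ker T^2$ from Theorem~\ref{reducing} is not actually needed, since $T^2x_0=0$ already follows from $Tx_0=0$, so the splitting step only uses that $\ker T$ is closed.
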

%\section*{Conclusion}
 
%\section*{Acknowledgements}
%The author appreciates the AE and the referee's valuable comments, which improved the presentation of the paper. 
%%%%%%%%%%%%%%%%%%%%%%%%%%%%%%%%%%%%%%%%%%%%%%%%%%
\bibliographystyle{amsplain}

\end{document}